\documentclass[reqno, 12pt]{amsart}

\usepackage{amsmath,graphicx}

\usepackage[algoruled,lined,noresetcount,norelsize]{algorithm2e}

\usepackage{amssymb,amsthm}
\usepackage{mathrsfs}
\usepackage{mathabx}
\usepackage[T1]{fontenc}
\usepackage{lmodern}
\usepackage{ulem}
\usepackage[babel]{microtype}
\usepackage[english]{babel}
\usepackage{geometry}
\usepackage{enumitem}
\usepackage{xcolor}
\usepackage[colorlinks,bookmarks,linkcolor=purple,citecolor=black]{hyperref}

\usepackage{tikz}
\usetikzlibrary{patterns,patterns.meta}

\definecolor{xred}{RGB}{240,0,30}
\definecolor{xblue}{RGB}{0,119,187}
\definecolor{xgreen}{RGB}{55,150,5}
\definecolor{xgray}{RGB}{200,200,200}

\def\itm#1{\rm ({#1})} 
\def\itmit#1{\itm{\it #1\,}} 
\def\rom{\itmit{\roman{*}}} 
\def\abc{\itmit{\alph{*}}}

\newtheorem{theorem}{Theorem}
\newtheorem{conjecture}[theorem]{Conjecture}
\newtheorem{proposition}[theorem]{Proposition}
\newtheorem{corollary}[theorem]{Corollary}
\newtheorem{lemma}[theorem]{Lemma}

\newtheorem{problem}[theorem]{Problem}

\newtheorem{remark}[theorem]{Remark}

\newtheorem{definition}[theorem]{Definition}

\title{On Ramsey-type problems for paths and cycles with few colour changes}

\author[P. Allen]{Peter Allen}
\author[J. Böttcher]{Julia Böttcher}
\author[D. Clemens]{Dennis Clemens}
\author[F. Hamann]{Fabian Hamann}
\author[J. Skokan]{Jozef Skokan}
\author[A. Taraz]{Anusch Taraz}

\address{(PA|JB|JS) London School of Economics, Department of Mathematics, Houghton Street, London WC2A 2AE, UK}
\email{p.d.allen|j.boettcher|j.skokan@lse.ac.uk}
\address{(DC|FH|AT) Technische Universität Hamburg, Institut f\"ur Mathematik, Am Schwarzenberg-Campus 3, 21073 Hamburg, Germany }
\email{dennis.clemens|fabian.hamann|taraz@tuhh.de}

\begin{document}

%%%%%%%%%%%%%%%%%%%%%%%%%%%%%%%%
% Abstract
%%%%%%%%%%%%%%%%%%%%%%%%%%%%%%%%
\begin{abstract}
    In 1967, Gerencser and Gy\'arf\'as determined the exact values of the two-colour Ramsey numbers of paths. In a footnote, they made the following observation: Every $2$-edge-coloured complete graph contains a Hamilton path with at most one colour change. Later, this led to a challenging and still wide open conjecture about covering edge-coloured complete graphs with monochromatic paths. Inspired by the original statement, we study paths and cycles with few colour changes in $3$-edge-coloured complete graphs. For this, we introduce a new Ramsey-type parameter: For $q,k \in \mathbb{N}$ and a graph $G$, let $R_q^k(G)$ denote the smallest $N \in \mathbb{N}$ such that every $q$-edge-coloured complete graph on $N$ vertices contains a copy of $G$ with at most $k$ vertices that are incident to edges in $G$ of different colours. For paths, we show that $R_3^1(P_n) = \frac{3n}{2} + O(1)$, and for even cycles, we show that $R_3^2(C_n) = \frac{3n}{2} + o(n)$.
\end{abstract}

\maketitle

%%%%%%%%%%%%%%%%%%%%%%%%%%%%%%%%
% Introduction
%%%%%%%%%%%%%%%%%%%%%%%%%%%%%%%%
\section{Introduction}
The \textit{Ramsey number} $R(G)$ of a graph $G$ is defined as the minimum $N \in \mathbb{N}$ such that any $2$-edge-colouring of $K_N$ contains a monochromatic copy of $G$. If we instead consider $q$-edge-colourings for some $q \in \mathbb{N}$, we write $R_q(G)$. A fundamental result of Ramsey \cite{ramsey1930problem} shows that $R_q(G)$ exists for all $G$ and $q \in \mathbb{N}$, but providing precise values or even close bounds proves to be challenging. In particular, the original bounds on $R(K_n)$ for $n \in \mathbb{N}$, shown by Erd\H{o}s and Szekeres in 1935 \cite{erdos1935combinatorial} and by Erd\H{o}s in 1947 \cite{erdos1947some} show that the growth of $R(K_n)$ is exponential in $n$, but the bounds are also exponentially far apart. Even though the problem has been well studied since then, no exponential improvement to the bounds had been achieved until a recent breakthrough by Campos, Griffiths, Morris, and Sahasrabudhe \cite{campos2023exponential}; see also~\cite{balister2024upper,gupta2024optimizing}
for further improvements and generalizations.

If $G$ is a sparse graph, some more precise results are known. For graphs $G$ with maximum degree $\Delta \in \mathbb{N}$, the Ramsey number $R(G)$ actually grows linearly with the number of vertices of $G$. This was originally conjectured by Burr and Erd\H{o}s \cite{BurrErdos1975}, and proved by Chv\'atal, Rödl, Szemer\'edi and Trotter \cite{chvatal1983ramsey} using Szemer\'edi's regularity lemma (see also \cite{Lee_BurrErdos} for a more general result on $d$-degenerate graphs).

Among bounded degree graphs, the Ramsey numbers of paths and cycles have been studied extensively. While $R_2(P_n) = \lfloor \frac{3n}{2} \rfloor - 1$ had already been determined by Gerencs\'er and Gy\'arf\'as in 1967 \cite{gerencser1967ramsey}, it took until 2007 to determine the precise value of $R_3(P_n)$ for large enough values of~$n$, using the regularity lemma \cite{gyarfas2007three}.
\begin{theorem}[Theorem 1 in \cite{gyarfas2007three}]
    For large enough $n$, we have
    \[R_3(P_n) =
\begin{cases}
  2n - 2, & \text{if $n$ is even}, \\
  2n - 1, & \text{if $n$ is odd}.
\end{cases}
\]
\end{theorem}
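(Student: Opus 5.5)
The plan has two parts: a lower-bound construction and an upper bound obtained by the regularity method, reducing the search for a long monochromatic path to a search for a large monochromatic connected matching in a reduced graph.

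\textbf{Lower bound.} For $n$ even, take $N=2n-3$ vertices and split them into four sets $A_1,A_2,A_3,A_4$ with $|A_1|=|A_2|=|A_3|=n/2-1$ and $|A_4|=n/2$.
\begin{itemize}
\item Edges inside $A_1\cup A_2$ and inside $A_3\cup A_4$ are coloured $1$. Each of these colour-$1$ components has at most $n-1$ vertices, so there is no $P_n$ in colour $1$.
\item Edges between $A_1\cup A_4$ and $A_2\cup A_3$ are coloured $2$.
\item Edges between $A_1\cup A_2$ and $A_3\cup A_4$ that are still uncoloured are coloured $3$.
\end{itemize}
Colours $2$ and $3$ are then bipartite. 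In each of them, one side of the bipartition has at most $n/2-1$ vertices, so a path uses at most $2(n/2-1)+1=n-1$ vertices. Hence there is no monochromatic $P_n$. For $n$ odd the same idea works with $N=2n-2$, using the parity constraint that a path in a bipartite graph with smaller side of size $(n-3)/2$ has fewer than $n$ vertices. The part sizes need adjusting so that the bipartite colours have small side of size $\lfloor (n-1)/2\rfloor$.

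\textbf{Upper bound.} Apply the multicolour regularity lemma to a $3$-colouring of $K_N$ with $N=(2+\eta)n$. This gives a reduced graph $\Gamma$ on $k$ clusters, where an edge receives the colours in which the corresponding pair is dense and regular, and $\Gamma$ is almost complete. The standard connected-matching lemma (Łuczak's method) says that a monochromatic connected matching covering $(1+\eta')k/N\cdot n$ clusters can be lifted to a monochromatic path on $n$ vertices. It therefore suffices to show that every $3$-multicoloured almost complete graph on $k$ vertices has a monochromatic connected matching covering about $k/2$ vertices. This step is proved by a case analysis on monochromatic components, using Gallai–Edmonds / Tutte-type deficiency arguments. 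For example, if the largest colour-$1$ component is small, the colour-$1$ components force colours $2$ and $3$ to be dense across them, and one finds a large matching there. This gives the asymptotic bound $R_3(P_n)\le (2+o(1))n$.

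\textbf{Exact result.} To remove the $o(n)$ term, run a stability analysis. If no monochromatic connected matching of size $(1/2+\delta)k/2$ exists, the reduced colouring must be close to the extremal configuration: four near-equal parts, with colour $1$ inside two pairs and colours $2$ and $3$ forming the two cross bipartitions. One then returns to the original colouring with exactly $2n-2$ or $2n-1$ vertices. Using that the colouring is $\varepsilon$-close to this structure, the atypical vertices are absorbed by greedy extension and by rotation arguments on long monochromatic paths in the near-complete bipartite pieces. Finally, counting part sizes shows that some part is large enough to give $P_n$; this is where parity enters.

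This stability and exact-cleanup step is the main obstacle. It requires careful handling of vertices whose colour profile is mixed, and of vertices in small parts. I would first try to classify each vertex by its majority colour into the four parts, then build paths by alternating between the dense bipartite parts, inserting the exceptional vertices using their many edges of the appropriate colour.
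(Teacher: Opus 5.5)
The paper gives no proof of this statement; it is quoted from \cite{gyarfas2007three}. Your outline (regularity lemma, monochromatic connected matchings in the reduced graph, then a stability analysis) follows the regularity and connected-matching strategy used there. As a proof, however, it has a genuine gap. The two steps that carry essentially all of the difficulty are only asserted: the connected-matching lemma for $3$-coloured almost complete reduced graphs, and the stability/cleanup step. Moreover, the stability dichotomy you do formulate is false.

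For even $n$, take disjoint sets $S,T,U$ with $|S|=|T|=n/2-1$ and $|U|=n-1$. Colour every edge meeting $S$ green, every remaining edge meeting $T$ blue, and $E(U)$ red. On these $2n-3$ vertices there is no monochromatic $P_n$:
red lives in a clique on $n-1$ vertices, and in green (resp.\ blue) the vertices outside $S$ (resp.\ $T$) are independent, so every such path has at most $2(n/2-1)+1=n-1$ vertices. In the reduced graph, no monochromatic connected matching covers more than about $k/2$ clusters, since red is confined to $U$ and green and blue have vertex covers of about $k/4$ clusters. So this colouring is exactly as extremal as your four-part one. Yet it is far from it: its colour classes have about $4k^2/32$, $7k^2/32$ and $5k^2/32$ edges, against $8k^2/32$, $4k^2/32$ and $4k^2/32$ for the four-part configuration. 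Under any permutation of the colours, the two therefore differ on $\Omega(k^2)$ edges.

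Consequently, ``no connected matching covering $(1/2+\delta)k$ clusters implies closeness to the four-part structure'' cannot be proved, and the exact step must handle at least this nested family as well. You need a reduced-graph lemma whose failure case lists all near-extremal structures. For each of them you then need a separate argument that every colouring of $K_{2n-2}$ (even $n$), resp.\ $K_{2n-1}$ (odd $n$), close to it contains a monochromatic $P_n$. Your sketch of absorbing atypical vertices into a four-part structure does not address this.

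A smaller point: your even lower bound is correct, but the odd one is garbled as written. With three parts of size $(n-3)/2$ on $2n-2$ vertices, the fourth part has $(n+5)/2$ vertices, and $A_3\cup A_4$ is a colour-$1$ clique on $n+1$ vertices. The right choice is all four parts of size $(n-1)/2$. Parity enters because every bipartite component is then a balanced $K_{(n-1)/2,(n-1)/2}$, whose longest path has only $n-1$ vertices.
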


For a general number of colours $q \in \mathbb{N}$, the value of $R_q(P_n)$ is still not known, not even asymptotically. The best known bounds are due to \cite{knierim2019improved} and \cite{yongqi2006new}, which show that
$$(q - 1 + o(1))n \leq R_q(P_n) \leq (q - \frac12 + o(1))n.$$

For the case of cycles, odd and even cycles behave very differently. While even cycles behave similarly to paths, the Ramsey number of odd cycles is significantly higher. To be more precise, for the $2$-coloured case we have \[
R_2(C_n) =
\begin{cases}
  6, &\text{if $3 \leq n \leq 4$},\\
  \frac{3n}{2} - 1, & \text{if $n$ is even}, \\
  2n - 1, & \text{if $n$ is odd}
\end{cases}
\] due to~\cite{bondy1973ramsey,faudree1974all,rosta1973ramsey}. Again, for the $3$-coloured case, precise values are only known for large enough $n$, since the proofs rely on the regularity lemma. The case of odd cycles was solved in~\cite{kohayakawa20053}, while a proof for even cycles appeared shortly after in~\cite{benevides20093}.
\begin{theorem}[Theorem~1 in \cite{benevides20093} and Theorem~1.1 in \cite{kohayakawa20053}]\label{thm:ramsey_evencycle}
For $n$ large enough, we have
\[
R_3(C_n) =
\begin{cases}
  2n, & \text{if $n$ is even}, \\
  4n - 3, & \text{if $n$ is odd}.
\end{cases}
\]
\end{theorem}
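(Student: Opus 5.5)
The statement has two parts, a lower-bound construction and a matching upper bound. I would prove each part separately for each parity of $n$.

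\textbf{Lower bounds.} For odd $n$, I would take four disjoint sets $V_1,\dots,V_4$, each of size $n-1$, giving $4n-4$ vertices in total.
\begin{itemize}
\item All edges inside each $V_i$ are red, so every red component has only $n-1$ vertices and contains no $C_n$.
\item The pairs $\{V_1,V_2\}$ and $\{V_3,V_4\}$ are joined in blue.
\item The remaining four pairs between the $V_i$ are joined in green.
\end{itemize}
The blue graph is a disjoint union of two complete bipartite graphs. The green graph is a blow-up of a $4$-cycle, which is also bipartite. So neither blue nor green contains an odd cycle, and $R_3(C_n)\ge 4n-3$.

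For even $n$, I would take a red clique $A$ on $n-1$ vertices together with a set $B$ of $n$ further vertices, for $2n-1$ vertices in total.
\begin{itemize}
\item Split $A$ into parts $A_1$ and $A_2$.
\item Colour all $A_1$--$B$ edges blue and all $A_2$--$B$ edges green.
\item Colour the edges inside $B$ so that neither blue nor green closes a $C_n$; for instance, make $B$ a blue clique on only a sub-part, or colour it consistently with the smaller of the two bipartite pieces.
\end{itemize}
The point is that a $C_n$ in a bipartite graph needs $n/2$ vertices on each side. Hence each class whose smaller side has fewer than $n/2$ vertices is safe. I expect this to give $R_3(C_n)\ge 2n$. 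The exact part sizes would be tuned, following the extremal configurations for $R_2(C_n)$ and $R_3(P_n)$; this construction still has to be pinned down.

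\textbf{Upper bound, asymptotic part.} Here I would use the connected-matching method.
\begin{enumerate}
\item Apply the $3$-coloured Szemer\'edi regularity lemma to $K_N$, with $N=(2+\gamma)n$ in the even case or $N=(4+\gamma)n$ in the odd case. This gives a reduced graph $\Gamma$ on $k$ clusters that is almost complete, with each edge coloured by a majority colour.
\item \emph{Even $n$:} show that $\Gamma$ contains a monochromatic connected matching covering at least $(1+\gamma/4)k\cdot n/N$ clusters. This is a $3$-colour Gallai/K\H{o}nig-type argument on connected monochromatic components, refining the path argument behind $R_3(P_n)\approx 2n$.
\item \emph{Odd $n$:} look for a monochromatic connected matching of the required size whose component is non-bipartite.
\item Blow the connected matching up via the blow-up/embedding lemma. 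Walk between matching edges along the connecting paths, and use the non-bipartite component to correct parity. This produces monochromatic cycles of every length in $[\,\varepsilon n,(1-\varepsilon)\cdot\text{size}\,]$, with the right parity, in particular $C_n$.
\end{enumerate}

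\textbf{Upper bound, exact part.} With only $N=2n$ (respectively $4n-3$) vertices, the matching argument either yields a large enough connected matching or shows that $\Gamma$ is close to one of the extremal colourings above. In the close case I would run a stability analysis: translate back to the original colouring, identify the near-cliques and near-bipartite classes, and clean up the exceptional vertices. Then find $C_n$ directly, using a Dirac/P\'osa-type argument inside a dense monochromatic near-clique or near-complete bipartite graph, together with the few extra vertices forced by $N$.

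\textbf{Main obstacle.} I expect this exact stability step to be the hardest, especially in the odd case. There the $4n-4$ construction admits several near-extremal variants, and each must be handled separately to gain the single missing vertex.
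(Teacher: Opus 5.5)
The paper does not prove this theorem. It quotes it from Benevides--Skokan (even $n$) and Kohayakawa--Simonovits--Skokan (odd $n$). Your upper-bound outline (regularity, monochromatic connected matchings, a non-bipartite component for parity, then stability) is the general strategy of those works. Your odd lower bound is the standard Bondy--Erd\H{o}s colouring and is correct. There are, however, two genuine gaps.

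\textbf{The even lower bound cannot work as designed.} You take $|A|=n-1$, split $A=A_1\cup A_2$, and colour $A_1$--$B$ blue and $A_2$--$B$ green. Since $n$ is even, $\max\{|A_1|,|A_2|\}\ge n/2$. So one colour contains $K_{n/2,n}\supseteq C_n$, however $B$ is coloured inside, and no tuning of part sizes helps.

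The missing idea is structural. Because $R_3(P_n)=2n-2$ for even $n$, any colouring of $K_{2n-1}$ with no monochromatic $C_n$ must have a colour class that contains $P_n$ but not $C_n$. Your building blocks exclude $P_n$ as well: cliques on at most $n-1$ vertices, and graphs with a vertex cover of fewer than $n/2$ vertices. Colourings whose colour-class components are such blocks therefore never exceed $2n-3$ vertices. You need blocks glued at cut vertices. For example, take sets $X,Y,Z,W$ of size $n/2-1$ and three further vertices $s,u,v$, and colour as follows.
\begin{itemize}
\item Red: all edges inside $\{s\}\cup X\cup Y$, all edges inside $\{s\}\cup Z\cup W$, and the triangle $suv$.
\item Blue: $E(X,Z)\cup E(Y,W)$, together with all edges from $u$ to $X\cup Y\cup Z\cup W$.
\item Green: $E(X,W)\cup E(Y,Z)$, together with all edges from $v$ to $X\cup Y\cup Z\cup W$.
\end{itemize}
Then $s$, $u$ and $v$ are cut vertices of the red, blue and green graphs respectively, and every block has at most $n-1$ vertices. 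Since every cycle lies within a block, this colouring of $K_{2n-1}$ has no monochromatic $C_n$.

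\textbf{The exact upper bound is a plan, not a proof.} The regularity and connected-matching argument only gives $(2+o(1))n$ and $(4+o(1))n$, as in Figaj--Ruci\'nski and \L uczak. To reach $2n$ and $4n-3$ you must show two things:
\begin{itemize}
\item every colouring without the required connected matching is close to an extremal colouring;
\item an exact clean-up then finds $C_n$ with only $2n$ (respectively $4n-3$) vertices.
\end{itemize}
That is essentially the entire content of the cited papers, and none of it is carried out here. In the even case your stability step also lacks a correct target, because the configuration you anticipate is not extremal. Tight colourings such as the one above contain a monochromatic $K_{n-1}$ attached through a cut vertex. Your ``dense near-clique plus a few extra vertices'' finishing argument would have to deal explicitly with such cut-vertex obstructions to closing a cycle.
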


A related problem originated from the study of $R_2(P_n)$ by Gerencs\'er and Gy\'arf\'as \cite{gerencser1967ramsey}. In a footnote, they mentioned the following result, for which a simple proof can be found e.g. in \cite{gyarfas1983vertex}.
\begin{proposition}[Gerencs\'er, Gy\'arf\'as~\cite{gerencser1967ramsey}]\label{prop:2col.path.cover}
    For $n \in \mathbb{N}$, let the edges of $K_n$ be coloured with two colours. Then there is a Hamilton path with at most one colour change (i.e.~with at most one vertex incident to edges of different colours in the Hamilton path).\footnote{
      Gerencs\'er and Gy\'arf\'as did not use the terminology of colour changes, but called such a path simple.
    }   
\end{proposition}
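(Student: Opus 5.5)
We argue by induction on $n$, proving the slightly stronger statement that there is a Hamilton path $P = P_R P_B$ which is the concatenation of a red path $P_R$ and a blue path $P_B$ (either possibly empty or a single vertex). The shared endpoint, or the junction between them, is the only place where a colour change can occur. The base cases $n\le 2$ are trivial.

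For the induction step, remove a vertex $v$ and apply the hypothesis to $K_{n-1}$. This gives a Hamilton path $x_1\dots x_a y_1 \dots y_b$, where $x_1\dots x_a$ is red, $y_1\dots y_b$ is blue, and the edge $x_a y_1$ has either colour. Absorbing that edge into the matching side, we may assume the path is $u_1\dots u_m$, red up to index $i$ and blue from $i$ onwards.

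We then try to insert $v$ while preserving the form "red then blue". There are two easy cases.
\begin{itemize}
\item If $vu_1$ is red, prepend $v$ to get a longer red segment.
\item If $vu_m$ is blue, append $v$ to get a longer blue segment.
\end{itemize}

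Otherwise $vu_1$ is blue and $vu_m$ is red. This remaining case is the main obstacle, and the trick I would try first is to rotate the path through $v$. Consider the sequence $u_m\, v\, u_1$. The path $u_1 \dots u_i$ (red) followed by $u_i\dots u_m$ (blue) can be rearranged as
\[ u_{i} \dots u_{m}\ (\text{read backwards: blue}),\; \text{then } v, \; \text{then } u_1 \dots u_{i}. \]
More carefully, take $u_i u_{i-1}\dots u_1 \, v\, u_m u_{m-1}\dots u_{i+1}$. The first segment $u_i\dots u_1$ is red. The edge $u_1 v$ is blue, the edge $v u_m$ is red, and the segment $u_m\dots u_{i+1}$ is blue. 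This gives the pattern red, blue, red, blue, which is too many changes.

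So instead I would use the vertex $u_i$ and the single edge colours more cleverly. Consider the colour of $v u_i$ (or of $u_1u_m$).

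A cleaner route, which I would commit to, is to work with cycles. We prove that the vertices of $K_n$ admit a Hamilton cycle consisting of a red path and a blue path, that is, a cycle with at most two colour changes. Deleting one edge at a change point then yields a Hamilton path with at most one change.

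For the induction with cycles, take such a cycle $C$ on $n-1$ vertices. If it is monochromatic, say red, then inserting $v$ between consecutive vertices $a,b$ gives:
\begin{itemize}
\item a valid cycle immediately if $va$ and $vb$ have the same colour;
\item a valid cycle if $va$ is red and $vb$ is blue, since then the blue part is the single edge $vb$ and the red part is the rest.
\end{itemize}
So this case is always fine.

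If $C$ has exactly two change vertices $p$ and $q$, insert $v$ next to $p$. Let $p$ have red neighbour $p^-$ and blue neighbour $p^+$ on $C$. Replace the edge $p p^+$ by $p v p^+$ or the edge $p^- p$ by $p^- v p$; one of these keeps the red/blue arc structure according to the colours of $vp$, $vp^+$ and $vp^-$. The case check reduces to the situation where $vp$ is red and $vp^+$ is blue, or similar combinations, and each combination places $v$ consistently on one side. If some combination fails, I would swap the roles of $p$ and $q$; verifying that at least one of the two positions works is the step I expect to require the care.
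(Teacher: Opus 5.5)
The paper does not prove this proposition itself; it only refers to \cite{gyarfas1983vertex}. Your proposal therefore has to stand on its own.

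The strategy you commit to is the right one: strengthen the claim to a Hamilton cycle made of one red arc and one blue arc, insert the new vertex $v$ by induction, and at the end delete an edge at a change vertex. The monochromatic case is fine; the subcase $va$ blue, $vb$ red is symmetric to the ones you list. The cycle version just needs base case $n=3$, since every triangle has at most two colour changes.

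However, the only non-trivial step is not carried out. That step is inserting $v$ when the cycle has change vertices $p$ and $q$. You assert that one of the two positions next to $p$ works ``according to the colours'', and fall back on swapping $p$ and $q$ if it does not. As written, the induction step is unproved.

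The missing observation is that only the colour of $vp$ matters. Write the cycle as a red arc $q\cdots p^-p$ followed by a blue arc $p\,p^+\cdots q$.
\begin{itemize}
\item If $vp$ is red, replace the blue edge $pp^+$ by $p\,v\,p^+$. The red arc becomes $q\cdots p^-\,p\,v$. Whatever the colour of $vp^+$, the edge colours around the new cycle still form one red run and one blue run.
\item If $vp$ is blue, replace the red edge $p^-p$ by $p^-\,v\,p$; the argument is symmetric.
\end{itemize}
The degenerate cases $p^+=q$ or $p^-=q$ only make one arc disappear, and $q$ is never needed.

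The path induction you abandoned also closes with exactly the hint you wrote down. For $i\in\{1,m\}$ the path is monochromatic and $v$ can be attached at an end, so assume $1<i<m$, with $vu_1$ blue and $vu_m$ red.
\begin{itemize}
\item If $vu_i$ is red, take $u_1\cdots u_i\,v\,u_m u_{m-1}\cdots u_{i+1}$; its only possible colour change is at $u_m$.
\item If $vu_i$ is blue, take $u_{i-1}\cdots u_1\,v\,u_i\cdots u_m$; its only possible colour change is at $u_1$.
\end{itemize}
In both versions the key is to condition on the colour of the edge from $v$ to the change vertex.
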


Later, Gy\'arf\'as posed the following conjecture, for which the case $q=2$ already follows from Proposition~\ref{prop:2col.path.cover}.
\begin{conjecture}[Gy\'arf\'as~\cite{gyarfas1989covering}]\label{conj:path.cover}
    Let $n,q \in \mathbb{N}$, and let the edges of $K_n$ be coloured with $q$ colours. Then, the vertices of $K_n$ can be covered by $q$ monochromatic paths.
\end{conjecture}

A slightly stronger version of Conjecture~\ref{conj:path.cover} was proposed by Erd\H{o}s, Gy\'arf\'as, and Pyber \cite{erdHos1991vertex}. They conjectured that it might even be possible to cover the vertices by $q$ monochromatic cycles. This is true for $q = 2$, as proved by Bessy and Thomass\'e~\cite{BessyThomasse} (it was previously proved for sufficiently large $n$, see~\cite{ALLEN_2008,luczak1998partitioning}). However, Pokrovskiy~\cite{pokrovskiy2014partitioning} constructed examples showing that the conjecture is actually false for $q \geq 3$. Still, Conjecture~\ref{conj:path.cover} remains open apart from the case $q = 3$, which was also proved by Pokrovskiy~\cite{pokrovskiy2014partitioning}.

\begin{theorem}[Theorem 3.2 and Theorem 3.3 in \cite{pokrovskiy2014partitioning}]\label{thm:path-cover}
    For $n \in \mathbb{N}$, let the edges of $K_n$ be coloured with three colours. Then there is a vertex-partition of $K_n$ into three monochromatic paths, of which at most two have the same colour, where a single vertex and the empty set is also considered to be a path.
\end{theorem}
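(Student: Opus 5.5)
The plan is to prove the statement by induction on $n$, strengthening it so that the three paths come with prescribed control over their endpoints. The base cases $n\le 3$ are trivial, since single vertices and the empty set count as paths. For the inductive step, fix a vertex $v$ and apply the hypothesis to $K_n - v$. This gives a partition into monochromatic paths $P_1,P_2,P_3$, at most two of which share a colour. Write $x_i,y_i$ for the endpoints of $P_i$. If some $P_i$ is empty, we let $\{v\}$ be that path. Otherwise we try to attach $v$ to an endpoint: if the edge $vx_i$ has the colour of $P_i$, we extend $P_i$. Because a single vertex or an edge may be regarded as a path of any colour, the trivial paths give extra freedom in choosing colours.

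The difficult case is when, for every $i$, both edges from $v$ to the endpoints of $P_i$ avoid the colour of $P_i$. Here I would use rotation–extension in the style of the Gerencs\'er--Gy\'arf\'as argument behind Proposition~\ref{prop:2col.path.cover}. Suppose $P_1$ is red and the edge $vx_1$ is blue. We look for an internal vertex $w$ of another path $P_j$ such that splitting $P_j$ at $w$ and rerouting through $v$ produces three monochromatic paths again. The colour restriction (at most two paths of the same colour) is what must be preserved throughout, so I would split into cases according to the colour pattern of $(P_1,P_2,P_3)$: all colours distinct, or two paths of one colour and one of another.

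To keep this manageable, I would strengthen the inductive hypothesis along the lines of Pokrovskiy's Theorems 3.2/3.3. First I would prove a lemma: the vertex set can be split as $V=A\cup B$, where $A$ is covered by a single monochromatic (say red) path, and every edge between $B$ and the endpoints of that path, as well as every edge inside $B$ that we need, is blue or green. The $2$-coloured part is then handled by a $2$-colour covering result on $B$, namely Proposition~\ref{prop:2col.path.cover} or a bipartite analogue of it. Applying that result to the blue/green structure gives two monochromatic paths. Together with the red path this yields three paths, at most two of the same colour.

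The main obstacle is the $2$-coloured step. Proposition~\ref{prop:2col.path.cover} gives a Hamilton path with one colour change only in a \emph{complete} graph, whereas the leftover structure here is not complete: it contains red edges that must be discarded. I would therefore prove an auxiliary result: in a $2$-edge-coloured balanced complete bipartite graph, or a complete graph minus a structured set of red edges, there is a partition into a blue path and a green path, possibly together with one extra red path absorbed at the endpoints. To prove it, I would choose the red path $P$ to be maximal, for instance longest, or lexicographically extremal among paths whose endpoint neighbourhoods are controlled. Maximality then forces the red edges in the remainder to be sparse enough that rotation arguments at the endpoints of the two $2$-coloured paths always succeed. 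Making this maximality argument work in every colour pattern is where I expect most of the casework and most of the risk.
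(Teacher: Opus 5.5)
There is a genuine gap in your proposal: both load-bearing steps are either missing or false as stated. The paper does not prove this statement; it quotes it from Pokrovskiy. His route merges green and blue and uses his two-colour theorem: every red/non-red colouring of $K_n$ has a vertex partition into a red path and a balanced complete bipartite non-red graph. He then applies Theorem~\ref{thm:path-partition-split} to the green/blue bipartite part, with split colourings needing a separate argument. Your proposal has roughly this shape.

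\textbf{The decomposition step.} Your induction on $n$ stalls exactly where the difficulty lies. Splitting some $P_j$ at an internal vertex to reroute through $v$ produces a fourth path, and nothing in your hypothesis provides an edge of the right colour to merge back to three. The replacement lemma, with ``every edge inside $B$ that we need'' non-red, is not a precise statement. If it means all edges inside $B$, it is false: if the red graph is a disjoint union of several triangles, a red path meets only one of them, so $B$ contains a red triangle. Taking $P$ longest also does not make the red edges in the remainder sparse. Suppose the red graph is $K_a\cup K_b$ with $a>b\ge 3$ and all other edges are blue. The longest red path is a Hamilton path of $K_a$, and it leaves a red clique $K_b$ that no green path plus blue path can cover. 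The right choice is a red path on $a-b$ vertices of $K_a$, which leaves a blue $K_{b,b}$. Finding that balanced bipartite structure in general is the substantial part of the proof, and your proposal does not supply it.

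\textbf{The $2$-coloured step.} As you formulate it, this step also fails: a green/blue colouring of a balanced complete bipartite graph need not be partitionable into a green path and a blue path. Take a split colouring (Definition~\ref{def:split-colouring}) with $E(X,Z)\cup E(Y,W)$ blue, $E(X,W)\cup E(Y,Z)$ green, and all four parts nonempty.
\begin{itemize}
\item The blue components are $X\cup Z$ and $Y\cup W$.
\item The green components are $X\cup W$ and $Y\cup Z$.
\item Every monochromatic path, including a single vertex, lies in one component of its colour.
\item Any blue component together with any green component misses one of the four parts.
\end{itemize}
By Theorem~\ref{thm:path-partition-split} this is the only obstruction, but it genuinely occurs. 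Since it lives entirely in the green and blue edges, no extremal choice of the red path removes it. Handling it requires an extra argument that uses the rigid $X,Y,Z,W$ structure together with the red edges and the red path. Your phrase ``possibly together with one extra red path absorbed at the endpoints'' is a placeholder for exactly this missing case.
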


If we focus on the original formulation of paths with colour changes rather than partitioning into monochromatic structures, not so much is known yet. Raynaud~\cite{raynaud1973circuit} proved the statement of Proposition~\ref{prop:2col.path.cover} for $2$-coloured complete oriented graphs. The structure of Hamilton cycles with two colour changes in $2$-edge-coloured complete graphs was further investigated in \cite{bialostocki1991simple}, where the length of the monochromatic path segments was considered. This was built upon in \cite{manoussakis1996cycles}, which studied the problem of finding $(s,t)$-cycles in $2$-edge-coloured complete graphs, where \textit{$(s,t)$-cycles} are defined as cycles of length $s + t$ in which $s$ consecutive edges are of one colour and the remaining $t$ edges are of the other colour.

Another related problem is the conjecture of Feder and Subi \cite{feder2013hypercube} about $2$-edge-coloured hypercubes. They conjecture that for all $d \in \mathbb{N}$, every $2$-edge-coloured $d$-dimensional hypercube $Q_d$ contains two vertices at distance $d$ which are connected by a path with at most one colour change (see also \cite{leader2014long,NorineEdgeAntipodal} for related conjectures). The conjecture of Feder and Subi is still wide open, as the best known upper bound on the number of colour changes is $(\tfrac{3}{8} + o(1))d$ due to Dvo\v{r}\'ak \cite{dvovrak2020note}.

Motivated by these results and open problems, we study a new Ramsey variant where we are not required to find a monochromatic subgraph, but allow for a few colour changes. While the notion of colour changes seems intuitive for paths and cycles, we want to make this notion more precise and also applicable to other kinds of graphs with the following definition.

\begin{definition}[Colour changes]
    Let $G = (V,E)$ be a graph with an edge colouring $f : E \rightarrow \mathbb{N}$. Let \[S := \big\{v \in V : \exists vx, vy \in E \text{ with } f(vx) \neq f(vy)\big\}\] be the set of vertices which are adjacent to edges of different colours. We say that $G$ has $k := |S|$ \textup{colour changes}. For a vertex $v \in S$, we say that a \textup{colour change occurs at $v$}.
\end{definition}

We are interested in the following generalisation of the Ramsey number of graphs.

\begin{definition}[Ramsey numbers with colour changes]
    Let $q,k \in \mathbb{N}$, and let $G$ be a graph. We define $R_q^k(G)$ to be the smallest integer $N \in \mathbb{N}$ such that every $q$-edge-colouring of $K_N$ contains a copy of $G$ with at most $k$ colour changes.
\end{definition}

Note that when~$G$ is a connected graph, then $R_q^0(G) = R_q(G)$ is the usual Ramsey number.
Further, if for a graph $G$ on $n$ vertices we have $R_q^k(G) = n$, this also implies a corresponding partitioning result, i.e., for every $q$-edge-colouring of $K_n$, its vertices can be partitioned into $k + 1$ monochromatic subgraphs of $G$.
In particular, $R_q^k(P_n)$ is closely related to Conjecture~\ref{conj:path.cover} which we will discuss in more detail in the concluding remarks. 

In this paper, we will focus on $R_q^1(P_n)$ and $R_q^2(C_n)$ (note that a cycle with one colour change does not exist). 
For $q = 2$, Proposition~\ref{prop:2col.path.cover} already gives us $R_2^1(P_n) = n$ and $R_2^2(C_n) = n$. We will consider the next open case, $q = 3$. For paths, we obtain bounds on $R_3^1(P_n)$ which are at most $5$ apart. Interestingly, $R_3^1(P_n)$ has the same asymptotic growth as $R_2(P_n)$, i.e. the extra colour is exactly offset by the possibility of using a colour change.

\begin{theorem}\label{thm:main_path}
For every large enough $n\in \mathbb{N}$, we have
\[  \left \lfloor \frac{3n-2}{2} \right\rfloor  \leq   R_3^1(P_n)\leq 3 \left \lceil \frac{n}{2} \right \rceil + 2\, \,,\]
and if $4 \mid n$, then moreover $R_3^1(P_n) \leq \frac{3n}{2}$.
\end{theorem}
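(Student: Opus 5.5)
For the lower bound I will use an explicit $3$-edge-colouring of $K_N$ with $N=\lfloor (3n-2)/2\rfloor-1$ that has no $P_n$ with at most one colour change. A path with at most one colour change is the union of two monochromatic paths sharing an endpoint. The natural candidate adapts the Gerencs\'er--Gy\'arf\'as extremal colouring for $R_2(P_n)$. Split the vertices into $A$ with $|A|\approx n-1$ and $B$ with $|B|\approx n/2-1$. Colour $A$ internally red. Use blue and green for the $A$--$B$ edges and the edges inside $B$, arranged so that every blue or green path uses at most $|B|$ vertices of $A$ and alternates with $B$.

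To check the construction, I take a path $P$ made of a red segment and a non-red segment. The red segment has at most $|A|<n$ vertices, and a blue/green segment has length about $2|B|+1$. Combining them must fall short of $n$. If this is not tight, I would instead split $A$ into two red cliques joined in blue and hang $B$ in green, and optimise the part sizes. The key checking step is a case analysis on which two colours $P$ uses, counting the vertices of $A$ versus $B$ and using that any monochromatic segment in a bipartite colour alternates sides.

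For the upper bound I argue directly, as follows. Given a $3$-colouring of $K_N$ with $N=3\lceil n/2\rceil+2$, merge two colours (blue and green) into one. By $R_2(P_m)=\lfloor 3m/2\rfloor-1$ applied to even $m$, we get long paths. It is cleaner, though, to work with the structure of two-coloured graphs. My plan is to find a long red path or a long connected ``blue-or-green'' structure, and then use Proposition~\ref{prop:2col.path.cover} inside the blue--green graph. That proposition yields a Hamilton path with one colour change on any vertex set spanning a two-coloured clique, so any set $W$ whose induced edges avoid red gives a path on $W$ with one change.

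Concretely, I would take a maximal red path $R$ and let $W=V\setminus V(R)$. If $|R|\ge n$ we are done. Otherwise I use the endpoints' neighbourhoods to show that red is sparse on $W$. Then I pass to the monochromatic components in the style of the proof of Theorem~\ref{thm:path-cover} (Pokrovskiy's partition into three monochromatic paths). Those three paths cover $N$ vertices, so one pair sharing an endpoint colour-compatible junction has total length at least $2N/3\approx n$. Roughly, if the partition has paths of lengths $\ell_1\ge\ell_2\ge\ell_3$, I want two of them that can be concatenated with one change, with $\ell_1+\ell_2\ge n$. The concatenation requires that an endpoint of one is joined to an endpoint of the other by an edge whose colour matches one of them. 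The $+2$ slack in $N$ is exactly what lets me discard up to two vertices to rearrange endpoints. For $4\mid n$, parity lets the $2N/3$ bound be achieved exactly with no slack.

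The main obstacle is this concatenation step. Theorem~\ref{thm:path-cover} does not control how the paths' endpoints are joined, and the two longest paths might share a colour (at most two do). My first attempt is a rotation argument: if the joining edge has the third colour, I would use the edges from the endpoints to the third path to reroute. This is where the case analysis concentrates, and where I expect the additive constant to be lost.
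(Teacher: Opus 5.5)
\textbf{Both halves of your proposal have genuine gaps.}

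\emph{Lower bound.} Your colouring already contains a good $P_n$. With $|A|=n-1$ and $A$ a red clique, take a red Hamilton path of $A$ ending at some $a\in A$ and append the edge $ab$ to any $b\in B$. This is a path on $n$ vertices whose only colour change is at $a$.

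The fallback fails in the same way. Let $A=A_1\cup A_2$ with $|A|=n-1$ and $|A_1|\ge|A_2|$, with $A_1,A_2$ red cliques, $E(A_1,A_2)$ blue and $E(A,B)$ green. Then a red Hamilton path of $A_1$ followed by a green path alternating between $B$ and $A_2$ has at least $|A_1|+2\min\{|A_2|,|B|\}\ge n$ vertices.

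The colouring that works is a blow-up of the properly $3$-edge-coloured $K_4$. Take parts $X,Y,Z$ of size $\lfloor\frac{n-2}{2}\rfloor$ and $W$ of size $1$ or $2$. Colour $E(X\cup W)\cup E(Y,Z)$ red, $E(Y)\cup E(Y,W)\cup E(X,Z)$ green, and $E(Z)\cup E(Z,W)\cup E(X,Y)$ blue. Every monochromatic component then lies inside two parts forming an edge of the template, so a good path meets at most three parts. Since $N-|X|=n-1$, a copy of $P_n$ must meet $X,Y,Z$ and hence avoids $W$. Inside $X\cup Y\cup Z$, the internal colour of each part equals the colour of the complete bipartite graph between the other two parts. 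So a one-change path meeting all three parts consists of two alternating bipartite segments through a common part, and has at most $2|X|+1<n$ vertices.

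\emph{Upper bound.} The step you postpone is the whole difficulty, and the surrounding claims fail. Theorem~\ref{thm:path-cover} gives no control over endpoints, so the two longest paths need not be joinable with one change, and the joinable pairs may be short.

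Here is a concrete example. Take a colouring as in Lemma~\ref{lemma:special.colouring}\ref{itm:special:b} with $E(X,Y)\cup E(Z,W)$ red, all four parts of size about $3n/8$, and all edges inside parts red. The red graph is then the disjoint union of the cliques on $X\cup Y$ and $Z\cup W$. In particular, red is not sparse outside a longest red path: the complement is a red clique. Theorem~\ref{thm:path-cover} may return the red Hamilton paths of these two cliques plus an empty third path. Every edge between the two cliques is green or blue, so every concatenation has two colour changes, and your rerouting via the third path has nothing to use.

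A good $P_n$ does exist here: a red path through $Y$ ending at some $x\in X$, followed by a green path alternating between $Z$ and the rest of $X$. But finding it requires cutting the red path back and weaving through the bipartite colour classes, not a modification at the endpoints.

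What is missing is a structural step that exposes this configuration. The paper gets it from $R_3(C_k)=2k$ for even $k$ (Theorem~\ref{thm:ramsey_evencycle}), which yields two disjoint monochromatic copies of $C_k$ with $k\approx n/2$.
\begin{itemize}
\item If they have different colours, this immediately gives a good path or a monochromatic $K_{k,k}$.
\item If both are red and lie in one red component, there is a red $P_n$.
\item Otherwise no red edges cross a suitable bipartition. Corollary~\ref{cor:split.bipartit} then forces the blue/green colouring across to be split, and one checks that $E(X,Y)\cup E(Z,W)$ must be red (Lemma~\ref{lemma:special.colouring}).
\end{itemize}
Pokrovskiy's partition is used only within this structure, inside each part. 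This goes through Lemmas~\ref{lem:sum.r.g.b} and~\ref{lemma:ABC.paths.cycles} and an averaging over the $12$ choices of a middle part $B$ and a pair $\{A,C\}$ (Theorem~\ref{thm:good.path.split.colouring}).

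This is also where the constants come from. The $+2$, and the bound $3n/2$ when $4\mid n$, reflect that $k\in\{\lceil n/2\rceil,\lceil n/2\rceil+1\}$ must be even with $N\ge 2k+\lceil n/2\rceil$. They are not slack for repairing endpoints.
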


Similarly for cycles, we show that $R_3^2(C_n)$ for even $n$ asymptotically behaves like $\frac{3n}{2}$.

\begin{theorem}\label{thm:main_even.cycle}
For every large enough even $n\in \mathbb{N}$, we have
$$ R_3^2(C_n) = \left(\frac{3}{2} + o(1) \right)n .$$
\end{theorem}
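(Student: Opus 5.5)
We have to prove two bounds: $R_3^2(C_n)\ge (3/2-o(1))n$ and $R_3^2(C_n)\le (3/2+o(1))n$.

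\emph{Lower bound.} We use the standard extremal colouring for paths. Take $N=\lfloor 3n/2\rfloor-2$ vertices, split into a set $A$ with $|A|=n-1$ and a set $B$ with $|B|=n/2-1$. Colour all edges inside $A$ red and all edges touching $B$ blue; the third colour is not used. Any $C_n$ has at most $n-1$ vertices in $A$, so it uses at least one blue edge. A maximal blue segment of the cycle alternates between vertices of $B$ and arbitrary vertices, so it has at most $2|B|$ edges. Since $2|B|<n$, the cycle contains red edges too. It therefore has at least two colour changes, and each one lies at a vertex of $A$ adjacent to a vertex of $B$. This construction does not rule out exactly two colour changes, though. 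We can do better by colouring edges inside $A$ with two colours and making $|B|$ slightly smaller, chosen so that two changes are still too few. The cleanest option is probably to reuse the lower-bound construction for $R_3^1(P_n)$ from Theorem~\ref{thm:main_path}. If a cycle has two colour changes, deleting one edge at a change vertex leaves a path $P_{n}$ with at most one change. Hence $R_3^2(C_n)\ge R_3^1(P_n)$, and the lower bound $\lfloor(3n-2)/2\rfloor$ follows immediately.

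\emph{Upper bound.} We use the regularity method together with connected matchings, as in \cite{benevides20093}. Fix $\varepsilon>0$ and $N=(3/2+\gamma)n$. Apply the 3-coloured regularity lemma to get a reduced graph $\Gamma$ on $t$ clusters. $\Gamma$ is almost complete, and each edge gets a majority colour. By the standard embedding lemma, it suffices to find one of the following structures in $\Gamma$:
\begin{enumerate}
\item a monochromatic connected matching covering $(1/2+\gamma')t$ vertices·scaled so that it spans $n$ vertices;
\item two monochromatic connected matchings $M_1$ of colour $c_1$ and $M_2$ of colour $c_2$, together covering enough clusters, whose components share a common cluster or are joined through two vertices.
\end{enumerate}
In case~(2), we build a long $c_1$-path through $M_1$ and a long $c_2$-path through $M_2$ with prescribed endpoints. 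We join their ends so that the cycle changes colour exactly twice. Because $n$ is even and the matching structure can be made bipartite, we can adjust the lengths to hit exactly $n$.

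The key combinatorial step is this. In a 3-coloured almost complete graph on $t$ vertices, find two colours $c_1,c_2$, a $c_1$-component $X$ and a $c_2$-component $Y$ with $X\cap Y\neq\emptyset$. Inside $X$ there should be a $c_1$-matching, and inside $Y$ a $c_2$-matching, which are disjoint and together cover about $2t/3\cdot$(ratio) $=$ enough vertices, namely $n/(N/t)\approx 2t/3$. We also need two linking points so that the two paths close into a cycle. We would prove this by a Gallai-type case analysis of the monochromatic component structure, as in Theorem~\ref{thm:path-cover} and the proof of Theorem~\ref{thm:main_path}. Either some colour has a spanning component, and then König--Egerváry applied to the union of two colours' components gives the matchings; or the standard "two components per colour" rectangle structure appears, and there we pick matchings explicitly.

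The main obstacle is the closing step. We need the union of two monochromatic connected matchings, of total size $\approx n$, to meet in at least two places, so that the two monochromatic paths form one cycle with exactly two changes. Guaranteeing two distinct junction vertices (not one) and parity control at the same time is where most of the case analysis will go. We would first try to obtain one shared cluster $Z\in X\cap Y$ carrying both a $c_1$-edge and a $c_2$-edge inside it. Such a cluster supplies both junctions, since its $\varepsilon n/t$ vertices are plentiful.
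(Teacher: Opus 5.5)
Your lower bound is correct and is the paper's argument. Deleting an edge at a colour-change vertex turns a good $C_n$ into a good $P_n$, so $R_3^2(C_n)\ge R_3^1(P_n)\ge\lfloor\frac{3n-2}{2}\rfloor$ via the construction of Section~\ref{section:lowerbound}. Your first two-colour attempt can simply be dropped.

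\textbf{The upper bound has a genuine gap.} Passing to the reduced graph only restates the problem. A good $C_n$ is a $c_1$-path and a $c_2$-path glued at two vertices. So by the connected-matching principle, your ``key combinatorial step'' is essentially a reformulation of the theorem. For it you offer only ``a Gallai-type case analysis'', ``K\"onig--Egerv\'ary'' and ``pick matchings explicitly'', and you leave the closing step open yourself.

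The case you wave through is exactly the hard one. Take parts $X,Y,Z,W$, each of size about $(\frac38+\frac{\gamma}{4})n$, with $E(X,Y)\cup E(Z,W)$, $E(X,Z)\cup E(Y,W)$, $E(X,W)\cup E(Y,Z)$ in three different colours, and an arbitrary colouring inside the parts.
\begin{itemize}
\item No colour has a spanning component, and any two cross-edge components of different colours share exactly one part.
\item So a good cycle using only cross edges has at most about $2|X|\approx(\frac34+\frac{\gamma}{2})n<n$ vertices.
\item Hence a good $C_n$ must use many edges inside the parts. Their colouring is arbitrary, so nothing can be chosen ``explicitly''.
\end{itemize}
The paper resolves this case with Pokrovskiy's partition of a $3$-coloured complete graph into three monochromatic paths (Lemma~\ref{lem:sum.r.g.b}). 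It combines this with the construction of Lemma~\ref{lemma:ABC.paths.cycles}, an averaging over the $12$ triples $(A,B,C)$, and a final shortening step to reach exactly $n$ vertices.

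Reaching that structure also needs arguments your sketch lacks, and the paper uses no regularity beyond the black box $R_3(C_k)=2k$. It takes two disjoint monochromatic copies of $C_k$ with $k\approx(\frac12+\frac{\varepsilon}{4})n$.
\begin{itemize}
\item If they have the same colour and no edge of that colour joins them, Corollary~\ref{cor:split.bipartit} (Pokrovskiy's theorem) yields either a good cycle or a split colouring between them.
\item If they have different colours, either many vertices send more than $k/2$ non-red edges across, and a shifting argument gives a good cycle on exactly $n$ vertices.
\item Or the red bipartite graph between them has minimum degree about $k/2$. Then Lemma~\ref{lemma:Dirac.variant} gives a long red path or two disjoint long red cycles, returning to the same-colour case.
\end{itemize}

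Two smaller slips. First, your item~(1) is miscalibrated. With $N=(\frac32+\gamma)n$, a connected matching on $(\frac12+\gamma')t$ clusters only gives cycles of length about $\frac34 n$; you need about $\frac23 t$ clusters. The $\frac12 t$ threshold belongs to the $N\approx 2n$ setting of \cite{benevides20093}. Second, edges ``inside'' a cluster are not controlled by regularity. Junctions must instead be typical vertices of a cluster lying in both a $c_1$-component and a $c_2$-component.
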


In fact, almost all parts of our proof work for odd and even $n$. For more discussion on odd cycles, see the concluding remarks.

We remark that our proofs do not employ the regularity method, but are dependent on Theorem~\ref{thm:ramsey_evencycle} which does make use of it.

%%%%%%%%%%%%%%%%%%%%%%%%%%%%%%%%
% Orginization
%%%%%%%%%%%%%%%%%%%%%%%%%%%%%%%%
\bigskip \noindent \textbf{Organization.}
In Section~\ref{section:prelim}, we collect some useful results and prove some easy corollaries. Afterwards, in 
Section~\ref{section:lowerbound} we present a colouring of $K_N$ for $N = \lfloor \frac{3n-4}{2} \rfloor$, which does not contain a path of length $n$ with at most one colour change, thus showing the required lower bounds for Theorem~\ref{thm:main_path} and Theorem~\ref{thm:main_even.cycle}. Sections~\ref{section:upper.bound.path} and~\ref{section:upper.bound.cycle} contain the proofs of the upper bounds for Theorem~\ref{thm:main_path} and Theorem~\ref{thm:main_even.cycle}, respectively. In the last section, we discuss further consequences and limitations of our results and state some open problems.

%%%%%%%%%%%%%%%%%%%%%%%%%%%%%%%%
% Notation
%%%%%%%%%%%%%%%%%%%%%%%%%%%%%%%%
\bigskip \noindent \textbf{Notation.}
Our notation is mostly standard and follows~\cite{west2001introduction}. For $n \in \mathbb{N}$, we write $[n] := \{i \in \mathbb{N} : i \leq n \}.$
For a graph $G$, we denote its vertex set by $V(G)$ and its edge set by $E(G)$. For an edge $\{v,w\} \in E(G)$, we write $vw$ for brevity. The \textit{minimum degree} of $G$ is denoted by $\delta(G) := \min\{\deg_G(v) : v \in V(G)\}.$ For $A,B \subset V(G)$, we write $E(A) := \{ e \in E(G) : e \subset A\}$ and $E(A,B) := \{ab \in E(G) : a \in A, b \in B\}$. Further, the subgraph induced by $A$ is denoted by $G[A] := (A,E(A))$. The \textit{neighbourhood} of $A$ is defined as $N(A) := \{v \in V(G) \setminus A : \exists u \in A, uv \in E(G)\},$ and the \textit{distance} between vertices $v,w \in V(G)$ is denoted by $\mathrm{dist}_G(v,w)$, i.e., the length of a shortest path from $v$ to $w$ in $G$.

The graph $P_n$ denotes the path on $n$ vertices, including $P_1$, which is a single vertex, and $C_n$ denotes the cycle on $n$ vertices. We represent a path on $n$ vertices, i.e.~a copy of $P_n$, in some graph $G$ by a sequence of distinct vertices $(p_1,p_2,\ldots,p_n)$ with $p_ip_{i+1} \in E(G)$ for $i \in [n-1]$. If $p_1 = p_n$, the sequence represents a copy of $C_{n-1}$. For two vertex sequences $x = (x_1,\ldots,x_i)$ and $y = (y_1,\ldots,y_j)$, we denote their \textit{concatenation} as $x \circ y := (x_1,\ldots,x_i,y_1,\ldots,y_j)$.

We call an edge-coloured path a \textit{good path} if it has at most one colour change. Similarly, we call an edge-coloured cycle a \textit{good cycle} if it has at most two colour changes. Note that a good path pn~$n$ vertices contains a good cycle on~$n$ vertices.

%%%%%%%%%%%%%%%%%%%%%%%%%%%%%%%%
% Preliminaries
%%%%%%%%%%%%%%%%%%%%%%%%%%%%%%%%
\section{Preliminaries}\label{section:prelim}

We will use the notion of a split colouring in bipartite graphs which was also used in \cite{gyarfas1983vertex, gyarfas1973ramsey, pokrovskiy2014partitioning}.

\begin{definition}[Split colouring]\label{def:split-colouring}
    Let $a,b \geq 2$. Let the edges of the complete bipartite graph $K_{a,b} = (V_1 \cup V_2, E)$ be coloured with two colours. We say that the colouring is \textup{split} if there exist partitions $V_1 = X \cup Y$ and $V_2 = Z \cup W$ such that all edges of $E(X,Z) \cup E(Y,W)$ have the same colour, and all edges of $E(X,W) \cup E(Y,Z)$ have the other colour.
\end{definition}

In \cite{pokrovskiy2014partitioning}, the following theorem was proven, which is an improvement on a theorem by Gy\'arf\'as and Lehel \cite{gyarfas1983vertex, gyarfas1973ramsey}.

\begin{theorem}[Theorem~1.8 in \cite{pokrovskiy2014partitioning}]\label{thm:path-partition-split}
    Let $k \geq 2$. Every edge-colouring of $K_{k,k} = (V_1 \cup V_2, E)$ with two colours satisfies one of the following statements:
    \begin{enumerate}[label=\abc]
        \item There are two monochromatic paths of different colours that cover all of $V_1 \cup V_2$.
        \item The colouring is split.
    \end{enumerate}
\end{theorem}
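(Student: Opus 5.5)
The plan is to argue by induction on $k$, with red and blue as the two colours. The base case $k=2$ can be checked by hand. Up to symmetry, $K_{2,2}$ is a $4$-cycle with some $2$-colouring. If some colour class contains a path through three vertices, or a perfect matching of one colour can be combined with a single edge appropriately, statement (a) follows directly. The only bad configuration is when each colour class is a perfect matching, i.e.\ the colouring is split with $|X|=|Y|=|Z|=|W|=1$.

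For the induction step, fix $u\in V_1$ and $v\in V_2$ and apply the statement to $K_{k-1,k-1}$ on $(V_1\setminus\{u\})\cup(V_2\setminus\{v\})$. This gives two cases.

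\emph{Case 1: the smaller graph is covered by a red path $P$ and a blue path $Q$.} We choose $u,v$ and the cover cleverly; for instance, we take among all choices one in which $P$ is as long as possible, and we allow $P$ or $Q$ to be empty or a single vertex. We then try to absorb $u$ and $v$. Say $uv$ is red. If $u$ or $v$ sends a red edge to an endpoint of $P$ on the correct side, we extend $P$ by $v,u$ or $u,v$. Otherwise all edges from $\{u,v\}$ to the relevant endpoints of $P$ are blue, and we try to extend $Q$ or re-route. A typical move is to rotate, i.e.\ to break $P$ at an endpoint and attach the loose piece to $Q$ through a blue edge to $u$ or $v$. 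Because the graph is bipartite, the endpoints of $P$ and of $Q$ lie on determined sides depending on the parity of the path lengths. Since $|V_1|=|V_2|$, the numbers of vertices of $P$ and $Q$ on each side are tightly constrained: if $P$ has more vertices in $V_1$, then $Q$ has more in $V_2$. This parity bookkeeping is what makes the case analysis finite. The goal in each subcase is either to produce a new red/blue cover of all vertices, or to derive enough forced colours on the edges at $u$, $v$ and the path endpoints to contradict the maximal choice.

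\emph{Case 2: the smaller colouring is split}, with parts $X,Y,Z,W$. Here we work explicitly. In a split colouring, red is the disjoint union of the complete bipartite graphs $K[X,Z]$ and $K[Y,W]$, and blue is $K[X,W]\cup K[Y,Z]$. Consider the colours of the edges from $u$ to $Z$ and $W$ and from $v$ to $X$ and $Y$. If $u$ behaves like a vertex of $X$ or of $Y$ (all red to $Z$ and all blue to $W$, or the reverse), and similarly $v$ behaves like a vertex of $Z$ or $W$, and $uv$ has the matching colour, then the whole colouring is split. Otherwise some edge at $u$ or $v$ is "crossing". Such an edge lets us join the two red components, or the two blue components, into one path. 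Together with the dense complete bipartite pieces, this lets us write down explicitly a red path and a blue path covering everything. The degenerate situations where some part is empty or has size one need separate care, since then the split structure is not genuinely split.

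I expect Case 1 to be the main obstacle: the absorption argument must handle all parity and endpoint configurations without getting stuck. My first attempt would be to strengthen the induction hypothesis to carry extra endpoint information, for example that the cover can be chosen with a prescribed side for an endpoint of $P$. Failing that, I would choose $u,v$ adaptively, for instance with $uv$ of the colour that is in the minority at the endpoints, so that a direct extension is always available.
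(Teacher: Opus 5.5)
The paper contains no proof of this statement; it is quoted from Pokrovskiy. Your proposal therefore has to stand on its own, and it does not yet. The part you flag as the main obstacle, Case~1, is where essentially all of the difficulty of the theorem lies. What you give there is a list of possible moves: extend $P$ or $Q$ at an endpoint, rotate a piece of $P$ onto $Q$, keep track of which sides the endpoints lie on. There is no argument that one of them always succeeds, and no invariant that makes the case analysis close.

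Worse, the stated goal of Case~1 (a new cover, or a contradiction with the maximal choice) is unattainable in general, because the full colouring can be split while the reduced one is covered. Take the split colouring with $X=\{u\}$, $Z=\{v\}$, $Y=V_1\setminus\{u\}$, $W=V_2\setminus\{v\}$.
\begin{itemize}
\item Deleting $u,v$ leaves an all-red $K_{k-1,k-1}$, whose red Hamilton path has $2k-2$ vertices. No other pair allows a red path on $2k-2$ vertices, so your rule of maximising $|V(P)|$ selects exactly this pair.
\item Now $uv$ is red, while $u$ is blue to all of $W$ and $v$ is blue to all of $Y$, so nothing can be absorbed.
\item In fact no cover exists: the blue graph consists of two disjoint stars centred at $u$ and $v$, and the red components are $\{u,v\}$ and $Y\cup W$.
\item There is also nothing to contradict, since $P$ already spans the reduced graph.
\end{itemize}
So Case~1 must be able to conclude that the colouring is split, i.e.\ reconstruct a global partition from local failures of absorption. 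Nothing in the proposal does this.

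You could sidestep this particular trap by choosing $u,v$ so that the reduced colouring is split with all four parts nonempty whenever such a pair exists. For $k\ge 3$ such a pair exists whenever the full colouring is split with all parts nonempty, since one part on each side has at least two vertices. In Case~1 you could then use the much stronger hypothesis that \emph{every} reduced graph is covered. But that is not in your plan, and even then the absorption argument itself has to be supplied, not just hoped for via an unspecified strengthening of the induction hypothesis.

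Case~2 is closer to complete. When all four parts are nonempty, the split partition of the reduced graph is determined by its red components, so the full colouring is split precisely when $u$, $v$ and $uv$ are consistent with it. However, the claim that a single crossing edge gives an explicit cover is only asserted. When $|X|\neq|Z|$ you still have to route the surplus vertices (which lie in $X$ and $W$, or in $Y$ and $Z$) together with $v$, and these constructions need to be written out.
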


We will not use the theorem directly as stated, but the following simple corollary.

\begin{corollary}\label{cor:split.bipartit}
    Let $a,b \geq k \geq 2$. Every edge-colouring of $K_{a,b} = (V_1 \cup V_2, E)$ with two colours satisfies one of the following statements:
    \begin{enumerate}[label=\abc]
        \item\label{itm:split:a} There exists a good cycle on $2k$ vertices.
        \item The colouring is split.
    \end{enumerate}
\end{corollary}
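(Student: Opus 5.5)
Apply Theorem~\ref{thm:path-partition-split} to a balanced $K_{k,k}$ sitting inside $K_{a,b}$, and then handle the split case by an extension argument.

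\textbf{Restricting to $K_{k,k}$.} Choose $V_1'\subseteq V_1$ and $V_2'\subseteq V_2$ with $|V_1'|=|V_2'|=k$. This is possible since $a,b\ge k$. Apply Theorem~\ref{thm:path-partition-split} to the induced coloured $K_{k,k}$ on $V_1'\cup V_2'$.

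\textbf{Outcome (a): two covering paths.} Suppose $V_1'\cup V_2'$ is covered by a red path $P$ and a blue path $Q$ that are vertex-disjoint. Since the graph is bipartite with equal sides, each path alternates between $V_1'$ and $V_2'$. Counting vertices, the total imbalance is zero, so either both paths are balanced or their imbalances are $+1$ and $-1$. I join them into one Hamilton cycle of $K_{k,k}$ using the at most two edges of the complete bipartite graph between their endpoints.

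\emph{Imbalanced case.} Then $P$ has both ends in $V_1'$ and $Q$ has both ends in $V_2'$, or vice versa. Join an end of $P$ to an end of $Q$, and join the other end of $P$ to the other end of $Q$. Both joining edges cross the bipartition, so they exist.

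\emph{Balanced case.} Each path has one end in each side. Let $P$ run from $p_1\in V_1'$ to $p_2\in V_2'$ and $Q$ from $q_1\in V_1'$ to $q_2\in V_2'$. Use the edges $p_2q_1$ and $q_2p_1$.

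\emph{Counting colour changes.} The cycle is $P$, edge $e$, $Q$, edge $f$. Colour $e$ and $f$ arbitrarily. Each joining edge adds a colour change at only one of its endpoints: whichever of $P$ or $Q$ it disagrees with, or at neither if it matches both. So the cycle's colour changes lie among the four endpoints, and at each joining edge exactly one of its two endpoints is a change. That gives exactly two colour changes, so the cycle is good.

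\emph{Degenerate paths.} If one path is empty, the other is a Hamilton path of $K_{k,k}$, which has one end in each side, so its ends are adjacent and closing it gives at most two changes. If a path is a single vertex, treat it as having both ends at that vertex; the argument is unchanged. In all cases we obtain a good cycle on $2k$ vertices, which is (a).

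\textbf{Outcome (b): the restriction is split.} Now every choice of $V_1',V_2'$ yields a split colouring. I would then show that either the whole of $K_{a,b}$ is split, or some good $C_{2k}$ exists.

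First, fix one split structure $X',Y',Z',W'$ on $V_1'\cup V_2'$. Take an outside vertex $v\in V_1\setminus V_1'$. Swap $v$ in for some vertex of $V_1'$ and apply the dichotomy again to the new set. Either we get a good cycle, or the new set is split, and then $v$ must fit consistently, placed in $X$ or in $Y$.

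The main obstacle is consistency. Split partitions need not be unique, for example when $X'$ is a single vertex or the colouring is monochromatic. So one must argue that the partitions obtained for different $k$-subsets glue together into a global split of $K_{a,b}$. Since $k\ge2$, each side keeps at least two vertices. A vertex's class is determined by its colour pattern to the other side: in a split colouring, two vertices of $V_1$ are in the same class exactly when they see every vertex of $V_2$ in the same colour. This relation is determined pairwise, so it extends globally.

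If gluing fails, there are $u,v\in V_1$ and $z,w\in V_2$ whose $2\times2$ pattern is not split-compatible. Include these four vertices in a $k$-subset; that subset is then not split, so it contains a good cycle. I would expect the careful verification of this local-to-global step to be the fiddliest part of the proof.
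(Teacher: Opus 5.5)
Your proposal is correct. Its first half follows the paper: restrict to a $K_{k,k}$, apply Theorem~\ref{thm:path-partition-split}, and close two disjoint paths of different colours into a good Hamilton cycle. Your imbalance analysis spells out what the paper compresses into ``connect the endpoints''. One small imprecision: if one path is a single vertex and the two joining edges differ in colour, ``exactly one change per joining edge'' is not literally true, but the total is still two.

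The extension step is genuinely different. The paper grows the split structure one vertex at a time. It swaps an outside $a'$ for some $a\in A_1$ and notes that the remaining $k-1\ge1$ vertices of $A_1$ already fix the partition $A_2=Z\cup W$. Hence $a'$ must see $Z$ in one colour and $W$ in the other. This is done for all of $V_1$ and then repeated on the $V_2$ side.

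You instead use a local characterization. A two-colouring of a complete bipartite graph is split exactly when any two vertices of $V_1$ have identical or complementary colour patterns towards $V_2$, i.e.\ when no $2\times2$ subpattern contains an odd number of red edges. Since $k\ge2$, a violating quadruple $u,v,z,w$ sits inside some $k\times k$ subgraph, which is then non-split and so contains a good $C_{2k}$.

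Your route is shorter and shows that any inconsistency is witnessed by just four vertices. It also avoids treating pairs $a'\in V_1\setminus A_1$, $b'\in V_2\setminus A_2$ separately, which the paper's ``apply the same argument to $A_2$'' leaves implicit. The paper's route stays within the language of partitions.

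Two small comments on your write-up of (b):
\begin{enumerate}
\item The worry that split partitions need not be unique (e.g.\ when $X'$ is a single vertex) is unfounded. Any single vertex of $V_1$ determines $\{Z,W\}$ as its red and blue neighbourhoods. So the partition is unique up to simultaneously swapping $X\leftrightarrow Y$ and $Z\leftrightarrow W$, and this is exactly the fact the paper's swap argument relies on.
\item The local-to-global step you expect to be fiddly takes two lines. Fix $u_0\in V_1$, let $X$ and $Y$ consist of the vertices whose patterns equal, respectively complement, that of $u_0$, and let $Z$ and $W$ be the red and blue neighbourhoods of $u_0$ in $V_2$. If the definition demands nonempty parts, nonemptiness is inherited from any split $k\times k$ restriction.
\end{enumerate}
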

\begin{proof}
    Write $G := K_{a,b}$. Assume~\ref{itm:split:a} does not hold. Then, for any subsets $A_1 \subset V_1$ and $A_2 \subset V_2$ of size $|A_1| = |A_2| = k$ we cannot find two vertex-disjoint paths on $A_1 \cup A_2$ that cover all of $A_1 \cup A_2$ and have different colours, because otherwise we could connect the endpoints of these paths and obtain a good cycle on $2k$ vertices. Thus, by Theorem~\ref{thm:path-partition-split}, the colouring on $G[A_1 \cup A_2]$ is split. Let the partitions $A_1 = X \cup Y$ and $A_2 = Z \cup W$ be given according to Definition~\ref{def:split-colouring}. Let $a \in A_1$ and $a' \in V_1 \setminus A_1$, and set $A_1' := (A_1 \cup \{a'\}) \setminus \{a\}$. Then, $G[A_1' \cup A_2]$ still must be split by the argument above. Since the partition $A_2 = Z \cup W$ is already determined by the other vertices in $A_1'$, this is only possible if all edges from $a'$ to $Z$ have the same colour, and all edges from $a'$ to $W$ have the other colour. But then $G[A_1 \cup \{a'\} \cup A_2]$ is split as well. As $a'$ was chosen arbitrarily, we can conclude that $G[V_1 \cup A_2]$ also has to be split, and by applying the same argument to $A_2$, the claim follows.
\end{proof}

We shall further make use of the following lemma, which is a consequence of Theoren~\ref{thm:path-cover}.

\begin{lemma}\label{lem:sum.r.g.b}
    Let $K_{a}$ be a complete graph on~$a$ vertices whose edges are coloured with red, green, and blue.
    There are values $\ell_c \in[a]$ and $p_c\in [2]$ for every $c \in \{red,green,blue\}$, such that the following statements hold.
    \begin{enumerate}[label=\rom]
    \item\label{itm:sum:i} For every $c \in \{red,green,blue\}$ there exist $p_c$ vertex-disjoint paths of colour $c$ in $K_a$ that cover $\ell_c$ vertices in total.
    \item\label{itm:sum:ii} $\ell_{red}+\ell_{green}+\ell_{blue}-p_{red}-p_{green}-p_{blue} \geq a-1$. 
    \end{enumerate}
\end{lemma}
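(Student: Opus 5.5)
We derive the lemma directly from Theorem~\ref{thm:path-cover}, applied to the given $3$-edge-coloured $K_a$. It yields a vertex partition of $K_a$ into three monochromatic paths $Q_1,Q_2,Q_3$, at most two of which share a colour. Empty paths and single vertices are allowed, so we first clean up degenerate pieces.

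A single vertex can be regarded as a path of any colour, and an empty path can simply be dropped. So we may assume the nonempty paths are monochromatic with assigned colours, and that no colour is used by more than two of them.

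For each colour $c$, let $p_c$ be the number of paths among $Q_1,Q_2,Q_3$ assigned colour $c$, and let $\ell_c$ be the total number of vertices they cover. Then $\sum_c p_c \le 3$, and since the paths partition $V(K_a)$, we have $\sum_c \ell_c = a$. This gives
\[
\sum_c \ell_c - \sum_c p_c \ge a-3.
\]
That falls short of the required $a-1$, so the bookkeeping has to be refined. The key point is that the lemma demands $\ell_c\in[a]$ and $p_c\in[2]$ for \emph{every} colour $c$, including colours that no path uses.

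For such an unused colour $c$ we set $p_c=1$ and take as its path a single vertex, or better an edge of colour $c$ if one exists. These paths are not required to be disjoint across different colours, only within one colour. So a colour-$c$ path on $\ell_c$ vertices contributes $\ell_c-p_c=\ell_c-1\ge 0$, and contributes $1$ if it is a single edge.

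Reading the inequality as $\sum_c(\ell_c-p_c)\ge a-1$, each monochromatic path costs one unit against its vertex count. The partition has total cost at most $3$ over $a$ vertices, giving $a-3$, so we must recover $2$. I would handle this with a case analysis on the colour pattern of $Q_1,Q_2,Q_3$.

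\textbf{Case 1: all three colours appear, one path each.} The partition contributes $a-3$, and nothing more can be gained directly. Instead we extend: each colour $c$ may use a \emph{larger} family of up to two paths not constrained by the partition. For example, for red we could take the red path $Q_1$ together with a second red path disjoint from it.

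This is exactly the step I expect to be the main obstacle. My first attempt would be to exploit the freedom to choose, for each colour separately, the best one or two paths. For instance, take a maximum monochromatic structure per colour and argue via the endpoints of $Q_1,Q_2,Q_3$. The edges between endpoints of different paths have some colour, and each such edge lets us extend a path of that colour by absorbing a vertex, or an entire other path, into one colour's family.

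Concretely, let $u$ be an endpoint of $Q_1$ (red) and $v$ an endpoint of $Q_2$ (green). If $uv$ is red, then $Q_1\circ Q_2$-style absorption is impossible since $Q_2$ is green. However, $Q_1$ plus the edge $uv$ gives a red path on $|Q_1|+1$ vertices, so $\ell_{red}$ grows by $1$ at no cost in $p_{red}$.

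Examining the three edges among endpoints of the three paths, one from each pair, should give two such unit gains in distinct colours. When all endpoint edges have colours that cannot help, we would instead re-apply Theorem~\ref{thm:path-cover} or choose a partition minimizing the number of paths.

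\textbf{Case 2: some colour is missing, or two paths share a colour.} This is easier. A missing colour $c$ gets $p_c=1$ and a path consisting of any available colour-$c$ edge, which contributes $1$. Alternatively, if there is no edge of colour $c$, the remaining paths use only two colours, and the Gerencs\'er--Gy\'arf\'as Proposition~\ref{prop:2col.path.cover} applied to the $2$-coloured $K_a$ gives a Hamilton path with one colour change. That path splits into two paths of distinct colours, with total cost $2$, yielding $a-2$. The missing colour's single vertex then contributes $0$, so one more unit must come from lengthening one of the two paths, say by overlapping the two colour families at the change vertex. The change vertex can be counted in both colours, which gives exactly $a-1$.

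I expect this overlapping idea, where families for distinct colours need not be disjoint, to be the general mechanism that also resolves Case 1. Each endpoint adjacency between differently coloured paths allows one vertex to be double counted.
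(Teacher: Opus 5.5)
\textbf{There is a genuine gap.} Your opening is sound: Theorem~\ref{thm:path-cover} gives $\sum_c(\ell_c-p_c)\ge a-3$, and you correctly see that families of different colours may overlap. But the two missing units are never actually recovered.

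\begin{itemize}
\item In Case~1 you only say that the endpoint edges ``should give two such unit gains''. For the situation where they ``cannot help'' you propose an unspecified fallback (re-applying Theorem~\ref{thm:path-cover}, or minimising the number of paths).
\item In Case~2, suppose the colour $c$ missing from the partition does occur in $K_a$. Then adding one colour-$c$ edge leaves you at $a-2$ when the partition has three nonempty paths, and the argument stops there. Only the subcase with no colour-$c$ edge at all, via Proposition~\ref{prop:2col.path.cover}, is complete.
\end{itemize}

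So the lemma is not proved as written. Note also that the feared bad case never occurs. An edge joining endpoints of two paths whose colours differ from its own colour $c$ can itself be added as a new path of colour $c$. It is disjoint from the path of that colour, and it raises $\ell_c$ by $2$ and $p_c$ by $1$. This is exactly the trick you already use for an unused colour.

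What is missing is a way to make two gains simultaneously and compatibly. Join the paths by two edges into a Hamilton path $H=Q_1\circ e_1\circ Q_2\circ e_2\circ Q_3$. Here $e_1$ joins an end of $Q_1$ to an end of $Q_2$, and $e_2$ joins the other end of $Q_2$ to an end of $Q_3$ (use fewer connecting edges if some $Q_i$ is empty). Now take the $r$ maximal monochromatic segments of $H$ as your families.

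\begin{itemize}
\item Consecutive segments share exactly one vertex and have different colours. Hence segments of equal colour are non-consecutive and therefore vertex-disjoint.
\item Together the segments cover $a+r-1$ vertices counted with multiplicity. This gives $\sum_c(\ell_c-p_c)=a-1$ exactly. A colour not appearing on $H$ gets a single vertex, contributing $0$.
\item For $p_c\le 2$: the edge sequence of $H$ splits into at most five monochromatic blocks, so $H$ has at most five runs. A colour on three runs would force exactly five runs, with runs $1,3,5$ being $Q_1,Q_2,Q_3$, all of that colour. Theorem~\ref{thm:path-cover} excludes this.
\end{itemize}

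This is the paper's argument. There it is phrased as follows: each connecting edge either extends a path, merges two same-coloured paths, or becomes a new path, gaining one unit each time. The segment formulation also absorbs the awkward case where $Q_2$ is a single vertex. It treats all colour patterns uniformly, so neither your case split nor Proposition~\ref{prop:2col.path.cover} is needed. Your idea of double-counting the change vertex in Case~2 is precisely this mechanism, applied to $H$ instead of a Gerencs\'er--Gy\'arf\'as path.
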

\begin{proof}
    By applying Theorem~\ref{thm:path-cover} on $K_a$, we can find exactly three monochromatic disjoint paths $P_1, P_2, P_3$ in $K_a$ that cover all its vertices and are such that not all three have the same colour.
    Denote by $\mathcal{P}_c$ the subset of these paths that have colour $c$ for every $c \in \{red,green,blue\}$, and set $p_c = |\mathcal{P}_c|$ and $\ell_c = \sum_{P \in \mathcal{P}_c} |V(P)|$. Then~\ref{itm:sum:i} is true, and we have $p_c \leq 2$ for all $c \in \{red,green,blue\}$ as required. Now we have
    $$\Phi := \ell_{red} + \ell_{green} + \ell_{blue} - p_{red} - p_{green} - p_{blue} = a - 3,$$
    where some of the summands might be equal to $0$.
    We will demonstrate how to modify the families $\mathcal{P}_c$ in order to increase $\Phi$ by $2$ while still fulfilling~\ref{itm:sum:i} and satisfying $p_c \leq 2$ for all $c \in \{red,green,blue\}$.
    Let $e_1$ be the edge between an endpoint of $P_1$ and an endpoint of $P_2$, and let $e_2$ be the edge between the other endpoint of $P_2$ and an endpoint $P_3$ such that $P_1 \circ e_1 \circ P_2 \circ e_2 \circ P_3$ forms a Hamilton path. Let $c$ be the colour of $e_1$.
    
    \textbf{Case 1:} If neither $P_1$ nor $P_2$ have colour $c$, then we can add $e_1$ as a path to $\mathcal{P}_c$. In this way $\ell_c$ increases by $2$, while $p_c$ only increases by $1$, and hence $\Phi$ increases by $1$. If $P_2$ consists of a single vertex, we delete $P_2$ and let $e_1$ take the role of $P_2$ in the following. This does not change $\Phi$.
    
    \textbf{Case 2:} If exactly one of the paths has colour $c$, then we can extend this path by $e_1$. This way, only $\ell_c$ increases by $1$ while $p_c$ stays the same, and thus $\Phi$ again increases by $1$. 
    
    \textbf{Case 3:} Otherwise, both paths have colour $c$ and we can replace $P_1,P_2 \in \mathcal{P}_c$ by the concatenation $P_1 \circ P_2$. In this case, $p_c$ decreases by $1$ while $\ell_c$ stays the same, again resulting in an increase of $\Phi$ by $1$.

    By following the same argument for $e_2$, $P_2$ (which might have been updated), and $P_3$, we can increase $\Phi$ by $1$ again, and thus get $\Phi = a - 1$ as required. Note that $p_c \leq 2$ still holds for all $c \in \{red,green,blue\}$ even if both $e_1$ and $e_2$ are chosen to be included as new paths and they have the same colour, since this means that all paths $P_1$, $P_2$, $P_3$ have colours different from $e_1$ and $e_2$.

    Lastly, if $p_c = 0$ for some $c \in \{red,green,blue\}$, we can simply add a path consisting of a single vertex to $\mathcal{P}_c$, increasing $p_c$ and $\ell_c$ both by one, and thus not changing $\Phi$.
\end{proof}

We will also use the following result by Moon and Moser, which is a version of Dirac's Theorem for bipartite graphs that we will use in Section~\ref{section:upper.bound.cycle} to find long monochromatic cycles.
\begin{theorem}[Corollary 4 in \cite{moon1963hamiltonian}]\label{thm:moonmoser}
    Let $n \in \mathbb{N}$. Let $G = (A \cup B, E)$ be a bipartite graph with $|A| = |B| = n$, and $\deg(v) \geq \frac{n}{2}$ for each $v \in A \cup B$. Then, $G$ contains a Hamilton cycle.
\end{theorem}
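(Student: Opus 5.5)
The plan is the classical Pósa–Ore rotation argument, run directly with the hypothesis $\deg(v)\ge n/2$ as stated, while tracking the equality case. If $n$ is odd, then $\deg(v)\ge n/2$ forces $\deg(v)\ge (n+1)/2$. Hence $\deg(a)+\deg(b)\ge n+1$ for every $a\in A$, $b\in B$, and the argument below closes without exceptions. If $n$ is even, the hypothesis permits $\deg(a)+\deg(b)=n$, so the final counting step may be tight. That equality case has to be analysed separately, and it genuinely contains an obstruction. The graph consisting of two disjoint copies of $K_{n/2,n/2}$ satisfies the degree condition and is not Hamiltonian. So for even $n$ the argument can only yield the conclusion outside this configuration, i.e.\ whenever $G$ is connected, which is how the statement is applied in Section~\ref{section:upper.bound.cycle}.

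\textbf{Step 1 (long path).} Take a longest path $P=(p_1,\dots,p_m)$ in $G$. By maximality, all neighbours of $p_1$ and of $p_m$ lie on $P$, so $m\ge n$ by the degree bound. If $m$ is even, the endpoints lie in different classes. For every neighbour $p_{i+1}$ of $p_1$, the vertex $p_i$ is not allowed to be a neighbour of $p_m$, since otherwise $P$ closes into a cycle on $V(P)$. Both $\{i : p_1p_{i+1}\in E\}$ and $\{i : p_ip_m\in E\}$ consist of indices of a single parity, and there are roughly $m/2$ such slots. If $\deg(p_1)+\deg(p_m)\ge n+1$, these two sets must intersect, so $V(P)$ spans a cycle $C$. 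If $m$ is odd, the endpoints lie in the same class, and I first rotate one endpoint to reach the even case.

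\textbf{Step 2 (cycle to Hamilton cycle).} If $C$ is not Hamiltonian, then some vertex outside $C$ has a neighbour on $C$, provided $G$ is connected. Connectivity follows from the degree condition unless we are in the two-component situation. Opening $C$ at that neighbour gives a path longer than $P$, contradicting maximality. Thus $C$ is Hamiltonian, and $m=2n$.

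\textbf{Step 3 (equality case, the main obstacle).} When $n$ is even and $\deg(p_1)+\deg(p_m)=n$, the parity counting in Step 1 can fail by exactly one. I will apply Pósa rotations. Every endpoint reachable by rotations again has degree exactly $n/2$, and its whole neighbourhood lies on $P$. Following these rotations, I expect to show that the set of reachable endpoints, together with the neighbourhoods of those endpoints, forms a complete bipartite block $K_{n/2,n/2}$ that has no edges to the rest of the graph. This is exactly the disconnected extremal example. Excluding it (equivalently, assuming $G$ connected, or $\deg(v)>n/2$ in some class) restores the crossing edge and completes the argument.

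The delicate part is this stability analysis of the rotation closure. That is where I expect most of the work to go.
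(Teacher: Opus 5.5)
You are right that the statement as quoted fails for even $n$ (two disjoint copies of $K_{n/2,n/2}$). The paper gives no proof and simply cites Moon and Moser, whose result needs $\deg(v)>n/2$. The genuine gap is your proposed repair. It is not true that a connected balanced bipartite graph with $\delta(G)\ge n/2$ is Hamiltonian, so Step~3 cannot be completed.

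Here is a counterexample for even $n$. Split $A=A_1\cup A_2$ and $B=B_1\cup B_2$ into parts of size $n/2$. Make $E(A_1,B_1)$ and $E(A_2,B_2)$ complete, put no edges between $A_1$ and $B_2$, and add any nonempty set of edges between $A_2$ and $B_1$. Then $\delta(G)=n/2$ and $G$ is connected, but $N(A_1)=B_1$. A Hamilton cycle would use $2|A_1|=n$ edges at $A_1$, all ending in $B_1$. This uses up all $2|B_1|=n$ cycle-edge slots at $B_1$, so $A_1\cup B_1$ would be closed off from $A_2\cup B_2$. For $n=2$ this is just the path $a_1b_1a_2b_2$. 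So the target of Step~3, a $K_{n/2,n/2}$ block with no edges to the rest of the graph, is not what the equality case forces. Here the block $A_1\cup B_1$ is closed only from one side: $A_1$ sends no edges out, while $B_1$ may. No stability analysis of the rotation closure can rescue a version that only assumes connectivity.

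The correct fix is the strict hypothesis $\deg(v)>n/2$, i.e.\ $\deg(a)+\deg(b)\ge n+1$. Under it Step~3 is unnecessary: any two vertices in the same class have a common neighbour, so $G$ is connected, and your Steps~1--2 finish the proof.

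One point needs repair even then. Pósa rotations preserve the class of the endpoint, so you cannot ``rotate to the even case''. Instead, suppose $m$ is odd with $p_1,p_m\in A$. The rotations at $p_1$ give more than $n/2$ alternative endpoints in $A$ of paths on $V(P)$. Some vertex $b\in B\setminus V(P)$ exists and has more than $n/2$ neighbours in $A$, so $b$ is adjacent to one of these endpoints. Then $P$ extends, contradicting maximality; this step also uses strictness.

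Finally, your remark about Section~\ref{section:upper.bound.cycle} is inaccurate. Connectivity of the auxiliary graph is never established there. What does hold is the strict inequality: in Lemma~\ref{lemma:Dirac.variant} one has $\delta(G')\ge \frac{N}{2}-t>\frac12\bigl(\frac{N}{2}+t\bigr)\ge \frac{|A'|}{2}$, because $t\le N/8$.
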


%%%%%%%%%%%%%%%%%%%%%%%%%%%%%%%%
% Lower bound construction
%%%%%%%%%%%%%%%%%%%%%%%%%%%%%%%%
\section{The Lower Bound Construction}\label{section:lowerbound}
In this section, we provide an edge-colouring of $K_N$ with $N = \left \lfloor \frac{3n-2}{2} \right\rfloor - 1$ using exactly three colours, such that every copy of $P_n$ has at least two colour changes, yielding the required lower bounds for Theorem~\ref{thm:main_path} and Theorem~\ref{thm:main_even.cycle}.

\begin{proof}[Proof of the lower bounds for Theorem~\ref{thm:main_path} and Theorem~\ref{thm:main_even.cycle}]
Our construction is as follows (see also Figure~\ref{fig:construction}). Let \[N = \left \lfloor \frac{3n-2}{2} \right\rfloor - 1 = \left \lfloor \frac{3n-4}{2} \right\rfloor\,.\]
Take a partition $V(K_N)=X\cup Y\cup Z\cup W$ such that
$|X|=|Y|=|Z|= \left\lfloor \frac{n-2}{2} \right\rfloor$,
and $|W|=1$ if $n$ is even, or $|W|=2$ if $n$ is odd. Consider the colouring where the set $E(X \cup W) \cup E(Y,Z)$ is red, the set $E(Y) \cup E(Y,W) \cup E(X,Z)$ is green, and the set $E(Z) \cup E(Z,W) \cup E(X,Y)$ is blue. The \textit{template} of this construction is the complete graph $K_4$ on vertices $\{x,y,z,w\}$, with the edges $xw,yz$ coloured red, $xy,zw$ coloured blue, and $xz,yw$ coloured green.

\begin{figure}
\begin{minipage}[c]{0.69\textwidth}
\centering
\begin{tikzpicture}[scale=1.5]

  \coordinate (X) at (0,    2.25);
  \coordinate (Y) at (3.75, 2.25);
  \coordinate (Z) at (0,    0);
  \coordinate (W) at (3.75, 0);

  \begin{scope}[even odd rule]
    \clip (-1, -0.55) rectangle (4.9, 2.8)
          (X) ellipse (0.8 and 0.35)
          (Y) ellipse (0.8 and 0.35)
          (Z) ellipse (0.8 and 0.35)
          (W) ellipse (0.175 and 0.075);

    % Green: solid fill + north east lines
    \fill[xgreen, opacity=0.25]
      (-0.75, 2.25) rectangle (0.75, 0);
    \fill[pattern={Lines[angle=45, distance=5pt, line width=0.3pt]}, pattern color=xgreen!60!black, opacity=0.5]
      (-0.75, 2.25) rectangle (0.75, 0);

    \fill[xgreen, opacity=0.25]
      (3.0, 2.25) -- (4.5, 2.25) -- (3.91, 0) -- (3.59, 0) -- cycle;
    \fill[pattern={Lines[angle=45, distance=5pt, line width=0.3pt]}, pattern color=xgreen!60!black, opacity=0.5]
      (3.0, 2.25) -- (4.5, 2.25) -- (3.91, 0) -- (3.59, 0) -- cycle;

    % Red: solid fill only
    \fill[xred, opacity=0.25]
      (-0.75, 2.25) -- (0.75, 2.25) -- (3.91, 0) -- (3.59, 0) -- cycle;
    \fill[xred, opacity=0.25]
      (3.0, 2.25) -- (4.5, 2.25) -- (0.75, 0) -- (-0.75, 0) -- cycle;

    % Blue: solid fill + crosshatch dots
    \fill[xblue, opacity=0.25]
      (0, 1.95) rectangle (3.75, 2.55);
    \fill[pattern={Dots[distance=4pt, radius=0.5pt, angle=40]},
          pattern color=xblue!70!black, opacity=0.7]
      (0, 1.95) rectangle (3.75, 2.55);

    \fill[xblue, opacity=0.25]
      (0, -0.30) -- (3.75, -0.07) -- (3.75, 0.07) -- (0, 0.30) -- cycle;
    \fill[pattern={Dots[distance=4pt, radius=0.5pt, angle=40]},
          pattern color=xblue!70!black, opacity=0.7]
      (0, -0.30) -- (3.75, -0.07) -- (3.75, 0.07) -- (0, 0.30) -- cycle;

  \end{scope}

  % --- X: red ellipse, solid only ---
  \fill[xred, opacity=0.25] (X) ellipse (0.8 and 0.35);
  \draw[xred!80!xgray, opacity=0.75, line width=1.5pt] (X) ellipse (0.8 and 0.35);

  % --- Y: green ellipse + north east lines ---
  \fill[xgreen, opacity=0.25] (Y) ellipse (0.8 and 0.35);
  \fill[pattern={Lines[angle=45, distance=5pt, line width=0.3pt]}, pattern color=xgreen!60!black, opacity=0.5]
    (Y) ellipse (0.8 and 0.35);
  \draw[xgreen, opacity=0.75, line width=1.5pt] (Y) ellipse (0.8 and 0.35);

  % --- Z: blue ellipse + patterns.meta dots ---
  \fill[xblue, opacity=0.25] (Z) ellipse (0.8 and 0.35);
  \fill[pattern={Dots[distance=4pt, radius=0.5pt, angle=40]},
        pattern color=xblue!70!black, opacity=0.7]
    (Z) ellipse (0.8 and 0.35);
  \draw[xblue, opacity=0.75, line width=1.5pt] (Z) ellipse (0.8 and 0.35);

  % --- W: red ellipse, solid only ---
  \fill[xred, opacity=0.25] (W) ellipse (0.175 and 0.075);
  \draw[xred!80!xgray, opacity=0.75, line width=1.5pt] (W) ellipse (0.175 and 0.075);

  % --- Labels ---
  \node[black] at (-1.2,  2.3) {$X$};
  \node[black] at ( 4.9, 2.3) {$Y$};
  \node[black] at (-1.2, 0)  {$Z$};
  \node[black] at ( 4.4, 0) {$W$};

\end{tikzpicture}
\end{minipage}
\hspace{-1.5cm}
\begin{minipage}[c]{0.3\textwidth}
\centering
\begin{tikzpicture}[vertex/.style={circle, draw, fill=black, inner sep=2pt},scale=2]
  \node[vertex,label=left:{$x$}] (x) at (0, 1)  {};
  \node[vertex,label=right:{$y$}] (y) at (1, 1)  {};
  \node[vertex,label=right:{$w$}] (w) at (1, 0)  {};
  \node[vertex,label=left:{$z$}] (z) at (0, 0)  {};
  \draw[xblue, dotted, opacity=0.75, line width=1.5pt] (x) -- (y);
  \draw[xred!80!xgray, opacity=0.75, line width=1.5pt] (x) -- (w);
  \draw[xgreen, dash pattern=on 8pt off 3pt, opacity=0.75, line width=1.5pt] (x) -- (z);
  \draw[xgreen, dash pattern=on 8pt off 3pt, opacity=0.75, line width=1.5pt] (y) -- (w);
  \draw[xred!80!xgray, opacity=0.75, line width=1.5pt] (y) -- (z);
  \draw[xblue, dotted, opacity=0.75, line width=1.5pt] (w) -- (z);
\end{tikzpicture}
\end{minipage}

\caption{The lower bound construction and its template: Dots represent blue, lines represent green, and red is without pattern.}
\label{fig:construction}
\end{figure}

This construction does not contain a good path on~$n$ vertices, and hence also not a good cycle on~$n$ vertices. Indeed, observe that $N - \left\lfloor \frac{n-2}{2} \right\rfloor = n - 1$. Hence, a path on~$n$ vertices in the construction has to intersect all three sets $X$, $Y$, $Z$.
If the path also intersects~$W$, then it cannot be good because the template of the construction does not contain a spanning good path. If, on the other hand, it does not intersect~$W$, then the only way to form a good path intersecting $X$, $Y$ and~$Z$ is to use some green $(Z,X)$ edges and some blue $(X,Y)$ edges, or one of the other two symmetric cases. But such a path can be of length at most $2|X|+1=2|Y|+1=2|Z|+1<n$.
\end{proof}

%%%%%%%%%%%%%%%%%%%%%%%%%%%%%%%%
% Proof Path
%%%%%%%%%%%%%%%%%%%%%%%%%%%%%%%%
\section{Proof of the Upper Bound for Paths}\label{section:upper.bound.path}

In this section, we will prove the upper bound for Theorem~\ref{thm:main_path}. We will first prove the following lemma, which allows us to either immediately find a good path on $n$ vertices as required, or leaves us with a special colouring described in Lemma~\ref{lemma:special.colouring}\ref{itm:special:b}. For an edge colouring of the complete graph~$K_N$, a \textit{colour class} is the set of edges~$E_c$ of one of the colours~$c$. A \textit{colour~$c$ component} is a connected component of the subgraph of~$K_N$ whose edge set is~$E_c$.

\begin{lemma}\label{lemma:special.colouring}
Let $n$ be sufficiently large, and let $k\in \left\{ \lceil \frac{n}{2} \rceil , \lceil \frac{n}{2} \rceil + 1 \right\}$ be even. If the edges of a complete graph on $N \geq 2k + \lceil \frac{n}{2} \rceil$ vertices are coloured with three colours, 
then at least one of the following properties holds.
\begin{enumerate}[label=\abc]
\item\label{itm:special:a} There is a good path on $n$ vertices.
\item\label{itm:special:b} There exists a partition of the vertex set $V(K_N)=X\cup Y\cup Z\cup W$
such that 
\begin{itemize}
\item $|X|+|Y|\geq \lceil \frac{n}{2} \rceil$ and $|Z|+|W|\geq \lceil \frac{n}{2} \rceil$,
\item the sets $E(X,Y)\cup E(Z,W)$ and $E(X,Z)\cup E(Y,W)$ and $E(X,W)\cup E(Y,Z)$
are subsets of pairwise different colour classes.
\end{itemize}
\end{enumerate}
\end{lemma}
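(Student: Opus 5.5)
We will argue by contradiction: assume there is no good path on $n$ vertices and derive the structure in \ref{itm:special:b}. The key tool is Corollary~\ref{cor:split.bipartit}, applied to a large bipartite subgraph that is (essentially) $2$-coloured.

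\textbf{Step 1: a bipartite pair avoiding one colour.} Start with a longest monochromatic path, say red, or more usefully the red components. Since $N \geq 2k+\lceil n/2\rceil \approx \tfrac{3n}{2}$ and a good path may consist of a red segment followed by a segment of another colour, no red component can contain a red path of length close to $n$. I would pick a red component $C$ and set $B := V\setminus C$ when $C$ is large. If every red component is small, I would instead group the red components into two sides $V_1,V_2$ with $|V_1|,|V_2|\geq k$. Either way I get disjoint sets $V_1,V_2$ of size at least $k$ with no red edges between them, so $E(V_1,V_2)$ is coloured green and blue only. The threshold $N\geq 2k+\lceil n/2\rceil$ is what makes this partition balanced enough. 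If the red components are all tiny, then red is nearly absent, and $2$-colour Ramsey for paths together with Proposition~\ref{prop:2col.path.cover} should directly yield a good $P_n$.

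\textbf{Step 2: apply the split corollary.} Apply Corollary~\ref{cor:split.bipartit} to $K[V_1,V_2]$ with parameter $k$. If it gives a good cycle on $2k\geq n$ vertices, deleting vertices gives a good path on $n$ vertices, which is \ref{itm:special:a}. Otherwise the colouring is split: $V_1=X\cup Y$ and $V_2=Z\cup W$, with $E(X,Z)\cup E(Y,W)$ one colour (say green) and $E(X,W)\cup E(Y,Z)$ blue. The evenness of $k$ is presumably needed for parity issues when converting cycles to paths, or when the split parts are unbalanced.

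\textbf{Step 3: colour the inside edges.} It remains to show that $E(X,Y)\cup E(Z,W)$ is entirely red, and that the size conditions $|X|+|Y|, |Z|+|W|\geq\lceil n/2\rceil$ hold. The size conditions follow from $|V_i|\geq k\geq\lceil n/2\rceil$. For the colours, suppose an edge $xy$ with $x\in X$, $y\in Y$ is green or blue, say green. Then I would build a long path with one colour change: a green path alternating between $X\cup W$ and $Z\cup Y$ via $E(X,Z)\cup E(Y,W)$, passing through $xy$, followed by a blue alternating path in $E(X,W)\cup E(Y,Z)$. This uses the complete bipartite structure to route through nearly all of $V_1\cup V_2$, which has at least $2k\geq n$ vertices. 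The degenerate cases, where one of $X,Y,Z,W$ is small or empty, have to be handled separately. In those cases one colour class already spans a near-complete bipartite graph on both sides, giving a monochromatic $P_n$ outright.

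\textbf{Main obstacle.} The main obstacle is Step 1: producing the two large sides with no red edges between them while retaining control of all vertices. The statement asks for a partition of the whole vertex set, so vertices outside $V_1\cup V_2$ must also be placed into $X,Y,Z,W$ consistently. I would handle this by extending as in the proof of Corollary~\ref{cor:split.bipartit}: each remaining vertex $v$ must, by the argument of Step 3, send edges of the forced colours to the parts. Otherwise $v$ can be appended to the end of a long good path, giving \ref{itm:special:a}.
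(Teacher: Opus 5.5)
\textbf{There is a genuine gap in your Step~1, and that step carries the real content of the lemma.} You never explain why the colour you call red admits a bipartition $V_1\cup V_2$ with $|V_1|,|V_2|\geq k$ and no red edges between the two sides.

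Grouping small components is never the difficulty. If every red component has at most $N-k$ vertices, a greedy grouping always works, tiny components included, so your ``red is nearly absent'' case needs no separate treatment. The real obstruction is the opposite case. A colour class can have a single component on more than $N-k$ vertices, for instance because it is connected, without containing a monochromatic $P_n$. Then no such bipartition exists for that colour. Nothing in your proposal rules out that all three colours behave this way. Taking red to be an arbitrary colour, or the colour of a longest monochromatic path, does not by itself help.

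Also, once all red components are distributed between the two sides, $V_1\cup V_2=V(K_N)$ automatically. So the extension issue in your final paragraph is not a real obstacle.

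\textbf{The missing idea is a Ramsey-type input that produces two large disjoint monochromatic structures of the same colour.} The paper gets this from Theorem~\ref{thm:ramsey_evencycle}.
\begin{enumerate}
\item Since $k$ is even and $N-k\geq 2k$, there are two disjoint monochromatic cycles $C_1,C_2$ on $k$ vertices. For $k=\lceil n/2\rceil+1$ there is one vertex too few, so $C_2$ is allowed to share a single vertex $w$ with $C_1$. If it does, the two cycles already give a good path on $2k-1>n$ vertices.
\item If $C_1,C_2$ have different colours, any edge between them in one of these two colours gives a good path on $2k\geq n$ vertices. Otherwise $E(V(C_1),V(C_2))$ is a monochromatic $K_{k,k}$. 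So the cycles have the same colour, say red.
\item If $C_1,C_2$ lie in the same red component, a red connecting path together with both cycles gives a red path on at least $2k\geq n$ vertices.
\item Otherwise, distribute all red components so that $C_i'\subseteq V_i$. This gives your $V_1,V_2$ with $|V_i|\geq |V(C_i)|=k$.
\end{enumerate}
This is where the evenness of $k$ and the threshold $N\geq 2k+\lceil n/2\rceil$ come from: two applications of $R_3(C_k)=2k$. They are not needed for parity issues in the path construction, as you guessed. Given the bipartition, your Steps~2 and~3 essentially match the paper's argument via Corollary~\ref{cor:split.bipartit} and the explicit one-colour-change path.
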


\begin{proof}
Fix an arbitrary colouring of $K_N$ with three colours. As a first step, we want to show that~\ref{itm:special:a} holds, or we can find two vertex-disjoint monochromatic cycles, each of length $k$, that have the same colour. First, assume $k = \lceil \frac{n}{2} \rceil$. Since $k$ is even, we can apply Theorem~\ref{thm:ramsey_evencycle} for large enough $n$ to obtain a monochromatic cycle $C_1$ on $k$ vertices. Furthermore, since $N - k \geq 2k$, we can find another disjoint cycle $C_2$ on $k$ vertices, again applying Theorem~\ref{thm:ramsey_evencycle}. If $C_1$ and $C_2$ have different colours, say $C_1$ is red and $C_2$ is blue, then one of the following holds. Either there is a blue or red edge between $C_1$ and $C_2$, and we immediately obtain a good path on $n$ vertices. Otherwise, all edges between $C_1$ and $C_2$ have the third colour, forming a monochromatic copy of $K_{k,k}$ which includes a monochromatic path on $n$ vertices. This implies that if $k = \lceil \frac{n}{2} \rceil$, then~\ref{itm:special:a} holds, or $C_1$ and $C_2$ must have the same colour.

Now assume $k = \lceil \frac{n}{2} \rceil + 1$. We proceed similarly. At first we obtain a monochromatic cycle $C_1$ of length $k$ using Theorem~\ref{thm:ramsey_evencycle}. Afterwards, we are left with $N - k = 2k - 1$ vertices outside $V(C_1)$, one vertex short of applying Theorem~\ref{thm:ramsey_evencycle} to obtain a second disjoint monochromatic cycle $C_2$ of length $k$. To overcome this, we allow $|V(C_1) \cap V(C_2)| = 1$ by choosing an arbitrary vertex $w \in V(C_1)$ and using Theorem~\ref{thm:ramsey_evencycle} on $(V(K_n) \setminus V(C_1)) \cup \{w\}$. If both cycles cover $w$, we immediately get a good path on $2k - 1 > n$ vertices covering all of $V(C_1)$ and $V(C_2)$. Otherwise, $C_1$ and $C_2$ are disjoint and by the same argument as above we get that~\ref{itm:special:a} holds or both cycles have the same colour.
Thus, from now on we can assume that $C_1$ and $C_2$ are disjoint and have the same colour, say red. Then, let $C_1'$ and $C_2'$ be the red components containing $C_1$ and $C_2$, respectively. If $C_1' = C_2'$, we can find a monochromatic red path on at least $n$ vertices.

Otherwise, $C_1'$ and $C_2'$ are disjoint. Choose any partition $V(K_N) = V_1 \cup V_2$ such that $C_i' \subset V_i$ for $i \in [2]$, and every further red component of $V(K_N)$ is fully contained in either $V_1$ or $V_2$. Note that $|V_i| \geq |V(C_i)| \geq k$ for $i \in [2]$, and that there are no red edges between $V_1$ and $V_2$. Hence, by Corollary~\ref{cor:split.bipartit}, we know that either we obtain the desired good path for~\ref{itm:special:a} holds, or there exists a partition $V_1 = X \cup Y$ and $V_2 = Z \cup W$ such that all edges of $E(X,W) \cup E(Y,Z)$ have the same colour, say blue, and all edges of $E(X,Z) \cup E(Y,W)$ have the other colour, say green.

It remains to show that all edges in $E(X,Y) \cup E(W,Z)$ are red if~\ref{itm:special:a} does not hold. Assume otherwise, and w.l.o.g.~suppose that there is a blue edge between $X$ and $Y$. We show that we can find a good path on $n$ vertices, contradicting the assumption that~\ref{itm:special:a} does not hold. To do so, we choose $X' = \{x_1, x_2, \ldots, x_{|X'|}\} \subset X$, $Y' = \{y_1, y_2, \ldots, y_{|Y'|}\} \subset Y$, $Z' = \{z_1, z_2, \ldots, z_{|Z'|}\} \subset Z$, and $W' = \{w_1, w_2, \ldots, w_{|W'|}\} \subset W$ such that $|X'| + |Y'| = |Z'| + |W'| = \lceil \frac{n}{2} \rceil$, and the edge $y_1x_{|X'|}$ is blue. Note that $|X'| \leq |W'|$ or $|Y'| \leq |Z'|$, and w.l.o.g.~we can assume $|X'| \leq |W'|$. Then, 
\begin{align*}
 P_1 &:= (w_1, x_1, w_2, x_2, \ldots, w_{|X'|}, x_{|X'|}, y_1, z_1, y_2, z_2, \ldots, y_{|Z'|}, z_{|Z'|}, y_{|Z'| + 1}), \text{and} \\
 P_2 &:= (y_{|Z'| + 1}, w_{|X'| + 1}, y_{|Z'| + 2}, w_{|X'| + 2}, \ldots, y_{|Y'|}, w_{|W'|}) 
\end{align*} 
are both monochromatic paths sharing only the endpoint $y_{|Z'| + 1}$, and covering all of $X' \cup Y' \cup Z' \cup W'$. Thus, concatenating $P_1$ and $P_2$ leads to a good path on at least $n$ vertices, with the single colour change occurring at vertex $y_{|Z'| + 1}$.
\end{proof}

Note that the lower bound on $N$ in Lemma~\ref{lemma:special.colouring} is always satisfied if $N \geq 3\lceil\frac{n}{2}\rceil + 2$. If $4  \mid n$, $N \geq \frac{3n}{2}$ is already sufficient.

Next, we aim to show that a colouring as described in Lemma~\ref{lemma:special.colouring}\ref{itm:special:b} already contains a good path on $n$ vertices, and thus conclude Theorem~\ref{thm:main_path}. To do so, we first prove the following helpful lemma, which we will also use in Section~\ref{section:upper.bound.cycle} in the proof of Theorem~\ref{thm:main_even.cycle}.

\begin{lemma}\label{lemma:ABC.paths.cycles}
    Let $N \in \mathbb{N}$, and let the edges of $K_N$ be coloured with red, green, and blue. Let $A$, $B$, $C$ be disjoint subsets of $K_N$ such that $|A| + |C| > |B|$, and $E(A,B)$ and $E(B,C)$ are subsets of different colour classes.
    Let $f\in \{red,green,blue\}$ be the colour of $E(A,B)$,
    and let $g \in \{red,green,blue\}$ be the colour of $E(B,C)$.
    Moreover, assume that there exist
    $p^A_{f}$ vertex-disjoint paths of colour $f$ in $A$
    that together cover $\ell^A_{f}$ vertices, and
    that there exist
    $p^C_{g}$ vertex-disjoint paths of colour $g$ in $C$
    that together cover $\ell^C_{g}$ vertices. Then the following holds.
    \begin{enumerate}[label=\rom]
        \item\label{itm:ABC:i} If $|B|\geq p^A_{f} + p^C_{g} - 1$,
        then there is a good path on at least \[\min\{|A| + |B| + |C|, 2|B| + \ell^A_{f} + \ell^C_{g} + 1 - p^A_{f} - p^C_{g}\}\] vertices.
        \item\label{itm:ABC:ii} If $|B| \geq p^A_{f} + p^C_{g}$,
        then there is a good cycle on at least \[\min\{|A| + |B| + |C|, 2|B| + \ell^A_{f} + \ell^C_{g} - p^A_{f} - p^C_{g}\}\] vertices using only edges in $E(A) \cup E(A,B) \cup E(B,C) \cup E(C)$.
    \end{enumerate}
\end{lemma}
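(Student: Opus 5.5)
The plan is to build the path explicitly: an $f$-coloured path zig-zagging between $B$ and $A$, followed by a $g$-coloured path zig-zagging between $B$ and $C$, with the single colour change at a vertex of $B$.

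\textbf{Setup.} Let $P_1,\dots,P_{p}$ with $p=p^A_f$ be the given $f$-paths in $A$. Let $Q_1,\dots,Q_{q}$ with $q=p^C_g$ be the given $g$-paths in $C$. Every other vertex of $A$ (resp.\ $C$) is treated as a trivial path on one vertex.

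\textbf{The $f$-part.} Any two consecutive paths in $A$ can be joined through a vertex of $B$, since all $A$--$B$ edges have colour $f$. So using $s$ vertices of $B$ as connectors, we can string $s+1$ of the paths in $A$ into one $f$-path. This works in the same way for $C$ with colour $g$.

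\textbf{Assembling the path.} Order the pieces in $A$ as $P_1,\dots,P_p$ first, followed by the trivial paths. Write $b_1,b_2,\dots$ for the vertices of $B$, and build
\[
b_1,\ P_1,\ b_2,\ P_2,\ \dots,\ b_{t},\ P_t,\ b_{t+1},\ R_1,\ b_{t+2},\ R_2,\ \dots
\]
where the $R_i$ are the pieces of $C$. The path starts and ends at vertices of $B$. The vertex $b_{t+1}$ is the unique colour change: its edge into $P_t$ has colour $f$ and its edge into $R_1$ has colour $g$. Ending at a $B$-vertex on each side is free, because the endpoints of all pieces attach to $B$.

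\textbf{Counting.} Use all $p$ nontrivial pieces of $A$ and all $q$ nontrivial pieces of $C$. This consumes $p+q+1$ vertices of $B$ if the path begins and ends in $B$, or $p+q-1$ if it begins inside $P_1$ and ends inside the last piece of $C$. The latter is exactly the hypothesis $|B|\ge p+q-1$ in~\ref{itm:ABC:i}.

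Each extra vertex of $B$ lets us insert one more singleton piece of $A$ or $C$. Pieces are available as long as $|A|+|C|$ is not exhausted, and $|A|+|C|>|B|$ ensures that $B$ runs out first, or else everything is used.

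\begin{itemize}
\item If $B$ runs out first, the path contains all of $B$, all $\ell^A_f+\ell^C_g$ vertices of the nontrivial pieces, and $|B|-(p+q-1)$ singletons. This gives $2|B|+\ell^A_f+\ell^C_g+1-p-q$ vertices.
\item Otherwise all of $A\cup C$ is used together with as many vertices of $B$ as possible, giving $|A|+|B|+|C|$ vertices. If $B$ has leftover vertices, this can only happen when $|B|$ exceeds the number of pieces; but the number of pieces is at least $|A|+|C|-\ell+p+q$ up to boundary terms, and the hypothesis handles this.
\end{itemize}

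In either case the path has at least the stated minimum number of vertices.

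\textbf{The cycle.} For~\ref{itm:ABC:ii}, run the same construction cyclically: alternate $B$-connectors and pieces around a cycle. The $A$-block is closed off by a $B$-vertex on both sides, and likewise the $C$-block. This needs exactly one connector per piece, so $|B|\ge p+q$ suffices. The two colour changes occur at the two $B$-vertices between the $A$-block and the $C$-block. The count is one less than in~\ref{itm:ABC:i}, namely $2|B|+\ell^A_f+\ell^C_g-p-q$, and all edges used lie in $E(A)\cup E(A,B)\cup E(B,C)\cup E(C)$.

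\textbf{Main obstacle.} The main difficulty is the bookkeeping in the case where all of $A\cup C$ fits but $B$ has leftover vertices. Extra $B$-vertices cannot be inserted consecutively, so one must check that the number of available slots, which is one per piece boundary, is at least $|B|$ whenever the first term of the minimum is attained. If it is not, one of the nontrivial paths can be split into singletons: this increases the number of slots while keeping every vertex, so that $|A|+|C|>|B|$ makes the count work.
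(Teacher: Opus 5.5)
Your proposal is correct and is essentially the paper's proof. The paper's Case~2, where $|B|<(|A|-\ell^A_f)+(|C|-\ell^C_g)+p^A_f+p^C_g-1$ (resp.\ $+p^A_f+p^C_g$ for the cycle), is your ``$B$ runs out first'' case. Its Case~1 shortens the given paths to cover only $s\le\ell^A_f$ and $t\le\ell^C_g$ vertices so that the number of connector slots equals $|B|$ exactly, which is your splitting fix. One thing to tidy up: your second bullet is not justified until that splitting step is invoked, and the splitting should be done one vertex at a time so the slot count can be matched exactly. Put this argument in the second bullet rather than in the closing remark.
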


\begin{proof}
    Denote $A = \{a_1,\ldots,a_{|A|}\}$, $B = \{b_1,\ldots,b_{|B|}\}$, and $C = \{c_1,\ldots,c_{|C|}\}$. We consider two cases depending on the size of $B$. For the proof of~\ref{itm:ABC:i}, set $p := p^A_{f} + p^C_{g} - 1$, and for the proof of~\ref{itm:ABC:ii}, set $p := p^A_{f} + p^C_{g}$.
    
    \smallskip
    \textbf{Case 1:} Assume $|B| \geq (|A| - \ell^A_{f}) + (|C| - \ell^C_{g}) + p$. In this case we aim to find a good path or cycle, respectively, on $|A|+|B|+|C|$ vertices. Let $p_f^A \leq s \leq \ell^A_{f}$ and $p_g^C \leq t \leq \ell^C_{g}$ such that $|B| = (|A| - s) + (|C| - t) + p$. By assumption, we can find disjoint paths $P_1, \ldots, P_{p^A_{f}}$ in $A$ of colour $f$ that cover $s$ vertices in total, w.l.o.g., the vertices $a_i$ with $i > |A| - s$. Additionally, we can find disjoint paths $Q_1,\ldots,Q_{p^C_{g}}$ in $C$ of colour $g$ that cover $t$ vertices, w.l.o.g., the vertices $c_i$ with $i > |C| - t$. Relabel $B=\{b_1,\ldots,b_{p},b'_1,\ldots,b'_{|A|-s},b''_1,\ldots,b''_{|C|-t}\}$.
    For the proof of~\ref{itm:ABC:i}, we find the path
    \begin{align*}
        P_1 &\circ (b_1) \circ \ldots \circ P_{p^A_f} \circ (b_{p^A_f}) \circ (a_1,b_1',\ldots,a_{|A| - s},b_{|A| - s}')\\
        &\circ (c_1, b_1'',\ldots,c_{|C| - t},b_{|C|-t}'') \circ Q_1 \circ (b_{p^A_f + 1}) \circ \ldots \circ (b_{p^A_f + p^C_g - 1}) \circ Q_{p^C_g},
    \end{align*}
    which has a single colour change at $b_{|A|-s}'$, and covers $|A| +  |B| + |C|$ vertices.
    For the proof of~\ref{itm:ABC:ii}, we find the cycle
    \begin{align*}
        (b_p) &\circ P_1 \circ (b_1) \circ \ldots \circ P_{p^A_f} \circ (b_{p^A_f}) \circ (a_1,b_1',\ldots,a_{|A| - s},b_{|A| - s}')\circ (c_1, b_1'',\ldots,c_{|C| - t},b_{|C|-t}'')\\
        &\circ Q_1 \circ (b_{p^A_f + 1}) \circ \ldots \circ (b_{p^A_f + p^C_g - 1}) \circ Q_{p^C_g} \circ (b_p), 
    \end{align*}
    which has colour changes at $b_{|A| - s}'$ and $b_p$ and covers $|A| + |B| + |C|$ vertices as well.

    \smallskip
    \textbf{Case 2:} Assume $|B| < (|A| - \ell^A_{f}) + (|C| - \ell^C_{g}) + p$. In this case, we aim to find a good path on $2|B| + \ell^A_{f} + \ell^C_{g} + 1 - p^A_{f} - p^C_{g}$ vertices or a good cycle on $2|B| + \ell^A_{f} + \ell^C_{g} - p^A_{f} - p^C_{g}$ vertices, respectively.    
    Let $0 \leq s \leq |A| - \ell_f^A$ and $0 \leq t \leq |C| - \ell_g^C$ such that $|B| = s + t + p.$ By assumption, we can find disjoint paths $P_1,\ldots,P_{p^A_f}$ in $A$ of colour $f$ that cover $\ell_f^A$ vertices, w.l.o.g., the vertices $a_i$ with $i > |A| - \ell_f^A$. In the same way, we find disjoint paths $Q_1,\ldots,Q_{p^C_g}$ in $C$ of colour $g$ that cover $\ell_g^C$ vertices, w.l.o.g., the vertices $c_i$ with $i > |C| - \ell_g^C$. Relabel $B = \{b_1,\ldots,b_p,b_1',\ldots,b_s',b_1'',\ldots,b_t''\}$. For the proof of~\ref{itm:ABC:i} we find the path
    \begin{align*}
        P_1 &\circ (b_1) \circ \ldots \circ P_{p^A_f} \circ (b_{p^A_f}) \circ (a_1,b_1',\ldots,a_s,b_s')\\
        &\circ (c_1,b_1'',\ldots,c_t,b_t'') \circ Q_1 \circ (b_{p^A_f + 1}) \circ \ldots \circ (b_{p^A_f + p^C_g - 1}) \circ Q_{p^C_g}
    \end{align*}
    which has only a colour change at $b_s'$. Its number of vertices is
    $$(\ell_f^A + s) + |B| + (\ell_g^C + t) = \ell_f^A + 2|B| + \ell_g^C - p_f^A - p_g^C + 1$$ as required.
    For the proof of~\ref{itm:ABC:ii}, we find the cycle 
    \begin{align*}
        (b_p) &\circ P_1 \circ (b_1) \circ \ldots \circ P_{p^A_f} \circ (b_{p^A_f}) \circ (a_1,b_1',\ldots,a_s,b_s')\\
        &\circ (c_1,b_1'',\ldots,c_t,b_t'') \circ Q_1 \circ (b_{p^A_f + 1}) \circ \ldots \circ (b_{p^A_f + p^C_g - 1}) \circ Q_{p^C_g} \circ (b_p)
    \end{align*}
    which has colour changes at $b_s'$ and $b_p$, and covers $$(\ell_f^A + s) + |B| + (\ell_g^C + t) = \ell_f^A + 2|B| + \ell_g^C - p_f^A - p_g^C$$ vertices.
\end{proof}

With Lemma~\ref{lemma:ABC.paths.cycles} at hand, we are ready to prove the following theorem.

\begin{theorem}\label{thm:good.path.split.colouring}
For $n$ sufficiently large, assume the complete graph on 
$N \geq \lfloor \frac{3n-2}{2} \rfloor$
vertices
is coloured with red, blue and green
such that there exists a partition
$V(K_N)=X\cup Y\cup Z\cup W$
with
$|X|+|Y|\geq \lceil \frac{n}{2} \rceil$ 
and $|Z|+|W|\geq \lceil \frac{n}{2} \rceil$
such that
the set $E(X,Y)\cup E(Z,W)$ is red, 
the set $E(X,Z)\cup E(Y,W)$ is green, 
and the set $E(X,W)\cup E(Y,Z)$ is blue.
Then there is
a good path on $n$ vertices.
\end{theorem}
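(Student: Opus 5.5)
We plan to apply Lemma~\ref{lemma:ABC.paths.cycles}~\ref{itm:ABC:i} with a suitable choice of the three sets. The structure is a blow-up of the properly $3$-edge-coloured $K_4$: each of the four parts sees the other three parts in three distinct colours. For each part, say $X$, the colours of $E(X,Y)$, $E(X,Z)$ and $E(X,W)$ are red, green and blue respectively. The idea is to take $B$ to be one part, and $A$, $C$ to be two other parts joined to $B$ in different colours. For example, $B=X$, $A=Y$ with $f=$ red, and $C=Z$ with $g=$ green. We may also take $A$ to be a union of two parts when both connect to $B$ in the same colour. That never happens within a single layer here, but we can merge parts along the cross pairs; for instance $E(X\cup W, Y)$ is not monochromatic, so merging is more delicate. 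We will therefore mostly work with single parts and exploit the inner colourings via Lemma~\ref{lem:sum.r.g.b}.

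\textbf{Step 1.} Apply Lemma~\ref{lem:sum.r.g.b} inside each of the four parts. This gives, for each part $P$ and each colour $c$, parameters $\ell^P_c$ and $p^P_c\le 2$ with $\ell^P_{red}+\ell^P_{green}+\ell^P_{blue}-p^P_{red}-p^P_{green}-p^P_{blue}\ge |P|-1$.

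\textbf{Step 2.} For each ordered choice of a middle part $B$ and two outer parts $A,C$, Lemma~\ref{lemma:ABC.paths.cycles}~\ref{itm:ABC:i} yields a good path on
\[
\min\{|A|+|B|+|C|,\ 2|B|+\ell^A_f+\ell^C_g+1-p^A_f-p^C_g\}
\]
vertices, provided $|A|+|C|>|B|$ and $|B|\ge 3$. Here $f$ and $g$ are determined by the pairs $(A,B)$ and $(B,C)$.

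\textbf{Step 3.} Average over choices. Fix $B$ to be the largest part. If $|B|$ is small, every three parts together contain at least $N-|B|\ge n$ vertices, so the first term of the minimum suffices once we check the second term. For the second term we sum over the three choices of the omitted part, or over the different colour pairs. Each part $P\ne B$ appears as an outer set with each of the two colours it can use towards $B$ via the other layer. Summing the second terms over suitable choices and using Step~1 gives an average of roughly $2|B|+\tfrac{2}{3}(N-|B|)$ up to constants. We want this to be at least $n$.

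Reaching $n$ requires $B$ to be reasonably large. This is where the hypotheses $|X|+|Y|\ge\lceil n/2\rceil$ and $|Z|+|W|\ge\lceil n/2\rceil$ enter: some part has size about $n/4$ or more. When all parts are small, there is also a direct construction. We alternate monochromatically between two parts along one colour, for instance green on $X\cup Z$ and green on $Y\cup W$, changing colour once through a blue or red edge. This gives length about $2\min$ of the relevant sizes plus the remaining parts.

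The main obstacle is the bookkeeping in Step~3. Lemma~\ref{lem:sum.r.g.b} only controls the sum over all three colours in a part. An outer part, however, contributes only the single colour $f$ or $g$ that matches its edges to $B$, and that colour cannot be chosen freely. For example, $Y$ can serve as the outer set towards $X$ only with red, and towards $W$ only with green. So we must pair the roles so that each part's three colours are each used once across three configurations: red towards $X$, green towards $W$, blue towards $Z$. Then we sum the three resulting bounds.

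We would first try the following. Pick the middle part $B$. Take the three configurations where $B$ ranges over a fixed part and its colour partners, so that each outer part's three colour classes appear exactly once. Then show that the maximum over the configurations is at least the average. The average is at least $n$ by Step~1 together with $N\ge\lfloor(3n-2)/2\rfloor$ and the size conditions.

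If this averaging is short by a constant, we fall back to the first term $|A|+|B|+|C|$. This term equals $N$ minus the omitted part, and it handles every configuration in which the omitted part has size at most $N-n$.
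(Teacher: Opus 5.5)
There is a genuine gap in Step~3, which carries the whole argument. Lemma~\ref{lem:sum.r.g.b} only bounds the sum $\sum_c(\ell^P_c-p^P_c)\ge |P|-1$ over all three colours of a part $P$. For a single colour it gives nothing; one may have $\ell^P_c=p^P_c=1$.

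With a fixed middle part $B$, every outer part $P$ enters only through the single colour of $E(P,B)$. Your estimate $2|B|+\frac23(N-|B|)$ therefore silently assumes that each outer part is almost covered by paths of its colour towards $B$, and this need not hold. Your alternative, three configurations in which each outer part's three colours each appear once, cannot exist. Three configurations give six outer slots, while four parts with three colours each need twelve.

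The bookkeeping that works uses all $12$ configurations: every part serves as the middle part together with each of the three unordered pairs of the remaining parts. Then each part $A$ is an outer part exactly twice towards each of the other three parts, hence exactly twice in each colour. Each $B$ is the middle part three times. So the second terms add up to at least $2\sum_A(|A|-1)+6N+12=8N+4\ge 12n-8>12(n-1)$, and one of the twelve paths has at least $n$ vertices.

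This averaging is only legitimate if, in all twelve configurations, two things hold: the hypotheses of Lemma~\ref{lemma:ABC.paths.cycles}\ref{itm:ABC:i} are satisfied, and the minimum is either at least $n$ or equal to the second term. Your fallback to the first term covers only configurations whose omitted part has at most $N-n$ vertices. Elsewhere the minimum may be $|A|+|B|+|C|<n$, and such a configuration cannot be fed into the sum. So a preliminary reduction is needed, and the proposal does not supply it.

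\begin{itemize}
\item If some part, say $X$, has $|X|\ge\lfloor n/2\rfloor$, take a green zigzag between $X$ and $Z$ followed by a blue zigzag between $X$ and $W$. This is a good path on $n$ vertices with its only colour change at a vertex of $X$. This is where $|Z|+|W|\ge\lceil n/2\rceil$ is really used.
\item Otherwise every part has at most $\lfloor\frac{n-2}{2}\rfloor$ vertices. Then every triple has $N-|D|\ge n$ vertices and $|A|+|C|>|B|$ always holds.
\item The condition $|B|\ge p^A_f+p^C_g-1$ may require $|B|\ge 3$. Counting shows that a part with at most two vertices forces $n$ even, $|W|=2$ and $|X|=|Y|=|Z|=\frac{n-2}{2}$. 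In that case, take the edge inside $W$, then an edge of the same colour from $W$ into the part it joins in that colour, then a zigzag between that part and another one. This is a good path on $n$ vertices.
\end{itemize}

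Finally, the direct construction you sketch (a green $X$--$Z$ path and a green $Y$--$W$ path joined by a red or blue edge) is not good. The joining edge creates a colour change at each of its endpoints. Also, the direct construction is needed when some part is large, not when all parts are small.
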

\begin{proof}
    We can assume that $N = \lceil \frac{3n - 2}{2} \rceil$ as otherwise we can delete vertices of $K_N$ until we have the desired number of vertices while maintaining $|X| + |Y| \geq \lceil \frac{n}{2} \rceil$ and $|Z| + |W| \geq \lceil \frac{n}{2} \rceil$. Denote $X = \{x_1,x_2,\ldots,x_{|X|}\}$, $Y = \{y_1,y_2,\ldots,y_{|Y|}\}$, $Z = \{z_1,z_2,\ldots,z_{|Z|}\}$, and $W = \{w_1,w_2,\ldots,w_{|W|}\}$. Set $\mathcal{S} := \{X,Y,Z,W\}$. We first show that we can find a good path on $n$ vertices if $|A| \geq \lfloor \frac{n}{2} \rfloor$ for any $A \in \mathcal{S}$. W.l.o.g., assume $|X| \geq \lfloor \frac{n}{2} \rfloor$. By assumption, $|Z| + |W| \geq \lceil \frac{n}{2} \rceil$. If $|Z| \geq \lceil \frac{n}{2} \rceil$ or $|W| \geq \lceil \frac{n}{2} \rceil$, we immediately get a monochromatic path on at least $n$ vertices in $E(X,Z)$ or $E(X,W)$, respectively. Otherwise, we get the path
    $$(z_1,x_1,z_2,x_2,\ldots,z_{|Z|},x_{|Z|},w_1,x_{|Z|+1},w_2,x_{|Z|+2},\ldots,w_{\lfloor \frac{n}{2}\rfloor - |Z|},x_{\lfloor \frac{n}{2}\rfloor})$$ for even $n$, and 
    $$(z_1,x_1,z_2,x_2,\ldots,z_{|Z|},x_{|Z|},w_1,x_{|Z|+1},w_2,x_{|Z|+2},\ldots,w_{\lfloor \frac{n}{2}\rfloor - |Z|},x_{\lfloor \frac{n}{2}\rfloor},w_{\lceil \frac{n}{2}\rceil - |Z|})$$ for odd $n$, each on $n$ vertices and with a single colour change at $x_{|Z|}$.

    Thus, from now on we can assume $|A| \leq \lfloor \frac{n - 2}{2} \rfloor$ for all $A \in \mathcal{S}$. As a next step, we want to show that $|A| \geq 3$ for all $A \in \mathcal{S}$. Assume otherwise, and suppose, w.l.o.g., $|W| \leq 2$. By $\sum_{A \in \mathcal{S}} |A| = N$, we can deduce that this is actually only possible if $n$ is even and $|W| = 2$, $|X| = |Y| = |Z| = \frac{n-2}{2}$. Due to symmetry, we may assume that the single edge in $W$ is blue. Then $(w_1,w_2,x_1,y_1,x_2,y_2,\ldots,x_{\frac{n-2}{2}},y_{\frac{n-2}{2}})$ forms a good path with the single colour change occurring at $x_1$. Hence, we can assume $|A| \geq 3$ for all $A \in \mathcal{S}$ for the remainder of the proof. Furthermore, for distinct sets $A,B,C,D \in \mathcal{S}$, we have $|A| + |C| = N - (|B| + |D|) \geq \lfloor \frac{3n-2}{2} \rfloor - 2\lfloor \frac{n-2}{2} \rfloor \geq \frac{n+2}{2}$. In particular, for any distinct sets $A,B,C \in \mathcal{S}$, we have $|A| + |C| > |B|$, so we can apply Lemma~\ref{lemma:ABC.paths.cycles} for any such triple of sets.
    
    We want to use Lemma~\ref{lemma:ABC.paths.cycles}\ref{itm:ABC:i} to find good paths of length at least $n$. 
    For this, we need that each $A \in \mathcal{S}$ contains suitable monochromatic paths covering enough vertices of $A$.
    These paths are provided by Lemma~\ref{lem:sum.r.g.b}.
    More precisely, for every $A \in \mathcal{S}$ and every colour $c \in \{red,green,blue\}$, fix values $\ell^A_c \in \big[|A|\big]$ and $p^A_c \in [2]$ as guaranteed by Lemma~\ref{lem:sum.r.g.b}. 
    By the conclusion of Lemma~\ref{lem:sum.r.g.b}, we can apply Lemma~\ref{lemma:ABC.paths.cycles}
    for every choice of distinct $A,B,C \in \mathcal{S}$ to either find a good path on at least $|A| + |B| + |C|$ vertices, or a good path, let's call it $P_{B-\{A,C\}}$, on at least $\ell^A_f + 2|B| + \ell^C_g + 1 - p^A_f - p^C_g$ vertices, where $f \in \{red,green,blue\}$ is the colour of $E(A,B)$ and $g \in \{red,green,blue\}$ is the colour of $E(B,C)$. Note that the size condition on $B$ in Lemma~\ref{lemma:ABC.paths.cycles}\ref{itm:ABC:i} is met because $|B| \geq 3$, and $p^A_f + p^C_g \leq 4$.

In case the obtained good path for a given triple $A,B,C \in \mathcal{S}$ has at least $|A| + |B| + |C|$ vertices, the path actually covers at least $n$ vertices, since $|A| + |B| + |C| = N - |D| \geq \lfloor \frac{3n-2}{2} \rfloor - \lfloor \frac{n-2}{2} \rfloor = n$ for $D \in \mathcal{S} \setminus \{A,B,C\}.$ So assume the other case occurs for all triples $A,B,C \in \mathcal{S}$.

We want to show that at least one of these paths covers $n$ vertices. Summing up the sizes of all
$12$ paths, we obtain
\begin{align*}
\sum_{\substack{B \in \mathcal{S} \\ \{A,C\} \subset \mathcal{S} \setminus \{B\}}} |V(P_{B-\{A,C\}})| & \geq 2 \sum_{A\in \mathcal{S}} (\ell^A_{red} + \ell^A_{green} + \ell^A_{blue} - p^A_{red} - p^A_{green} - p^A_{blue}) + 3\sum_{B\in \mathcal{S}} 2|B| + 12 \\
& \geq 2 \sum_{A\in \mathcal{S}} (|A|-1) + 6 \cdot \sum_{B\in \mathcal{S}} |B| + 12& \\
& = 8N + 4 \geq  12n - 8\,,
\end{align*}
where we use Lemma~\ref{lem:sum.r.g.b}\ref{itm:sum:ii} in the second inequality.

Hence, among the $12$ paths considered in the sum, at least one path must have at least $n$ vertices, finishing the proof.
\end{proof}

Theorem~\ref{thm:main_path} follows by an application of Lemma~\ref{lemma:special.colouring}, which leads to a good path on $n$ vertices or a special colouring that contains a good path on $n$ vertices by Theorem~\ref{thm:good.path.split.colouring}.
%%%%%%%%%%%%%%%%%%%%%%%%%%%%%%%%
% Proof Cycle
%%%%%%%%%%%%%%%%%%%%%%%%%%%%%%%%
\section{Proof of the Upper Bound for Cycles}\label{section:upper.bound.cycle}
In this section, we will prove the upper bound for Theorem~\ref{thm:main_even.cycle}. The approach is very similar to the proof for paths. As before, we want to find a good cycle in two steps. First, we show that for any colouring we either find a good cycle, or the colouring has a very specific structure. Second, we show that given a colouring of this specific structure we find a good cycle as well.

To prepare for the first step, we use the following helpful lemma about paths and cycles in bipartite graphs.
    
\begin{lemma}\label{lemma:Dirac.variant}
Let $t,N\in \mathbb{N}$ with $t\leq N/8$.
Let $G=(A\cup B,E)$ be a bipartite graph with $|A|=|B|=N$
and $\delta(G)\geq \frac{N}{2} - t$. Then at least one of the following statements holds.
\begin{enumerate}[label=\rom]
\item\label{itm:Dirac:i} $G$ contains a path on at least $2N-8t$ vertices.
\item\label{itm:Dirac:ii} $G$ contains two vertex-disjoint cycles, each of length at least $N-2t$.
\end{enumerate}
\end{lemma}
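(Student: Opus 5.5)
We take a longest path $P=(v_1,\dots,v_m)$ in $G$ and argue that if $m<2N-8t$, then $G$ must split into two large pieces, each containing a long cycle. This is a bipartite version of the Pósa rotation / Dirac argument.

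\textbf{Step 1: the endpoints see only $P$.} Since $P$ is longest, every neighbour of $v_1$ and of $v_m$ lies on $P$. If $v_1$ and $v_m$ lie in different parts (so $m$ is even), the usual crossing argument applies. Suppose $v_1$ is adjacent to $v_{i+1}$ and $v_m$ is adjacent to $v_i$ for some $i$. Then $G[V(P)]$ contains a Hamilton cycle $C$ on $m$ vertices. If $G$ is connected, some vertex outside $C$ is adjacent to $C$, and opening $C$ at that point gives a path on $m+1$ vertices, a contradiction. So either no such crossing pair exists, or $G$ is disconnected.

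\textbf{Step 2: the no-crossing case forces $m$ large.} If no crossing pair exists, the neighbourhood of $v_1$ (at least $N/2-t$ vertices on $P$) and the shifted neighbourhood of $v_m$ are disjoint subsets of one colour class of $P$. That colour class has about $m/2$ vertices, so $m/2\ge N-2t$, i.e.\ $m\ge 2N-4t$, and \ref{itm:Dirac:i} holds. If $v_1$ and $v_m$ lie in the same part, we first drop the last vertex $v_m$ and rerun the argument on the resulting path with endpoints in different parts. This costs only a constant, so the bound $2N-8t$ leaves enough slack.

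\textbf{Step 3: the disconnected (or low-expansion) case.} Otherwise $G$ has a component $H$ with fewer than $2N-8t$ vertices. Every vertex of $H$ has degree at least $N/2-t$ into the other side of $H$, so each side of $H$ has at least $N/2-t$ vertices. The complement also contains at least $N/2-t$ vertices on each side. So both $H$ and its complement are bipartite graphs with sides of size roughly between $N/2-t$ and $N/2+t$, with minimum degree at least $N/2-t$.

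\textbf{Step 4: long cycles in each piece.} Inside each piece the sides are nearly balanced and the minimum degree is nearly half of... in fact nearly the whole side size. After deleting a few vertices to balance the sides (at most $2t$ vertices, with the degree condition still at least half, using $t\le N/8$), Theorem~\ref{thm:moonmoser} gives a Hamilton cycle of length at least $N-2t$ in each piece. This yields \ref{itm:Dirac:ii}.

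\textbf{Main obstacle.} The delicate step is the intermediate case in Step 1: $G$ is connected but the rotation and crossing argument still fails. I would handle it by running Pósa rotations to obtain a set of endpoints of size at least about $N/2-t$. If this endpoint set has few neighbours outside $P$, its neighbourhood confines a nearly closed piece of size about $N$. That is exactly the two-cluster structure of \ref{itm:Dirac:ii}, and the loss of $8t$ in \ref{itm:Dirac:i} is there to absorb these error terms.
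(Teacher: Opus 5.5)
\textbf{There is a genuine gap: the case where both endpoints of the longest path lie in the same part is not handled.}

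Your treatment of the case where the endpoints of the longest path $P$ lie in different parts is correct and matches the paper's Case~1. Pigeonhole forces a crossing pair once $|V(P)|<2N-4t$, so $V(P)$ carries a Hamilton cycle. Maximality then makes $V(P)$ a closed balanced piece, and Moon--Moser applies to the (also balanced) rest with no deletions.

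You dismiss the same-part case by dropping $v_m$ and rerunning. This is not a question of losing a constant against the slack in $2N-8t$; the reduction fails for two reasons.
\begin{itemize}
\item The new endpoint $v_{m-1}$ is an interior vertex of $P$, so maximality of $P$ says nothing about it. It may have neighbours off $P$, and nothing guarantees that $N/2-t$ of its neighbours lie on $(v_1,\dots,v_{m-1})$. The pigeonhole step therefore has no input.
\item Even granting a cycle $C$ on $V(P)\setminus\{v_m\}$, opening $C$ at an outside neighbour gives a path on only $m=|V(P)|$ vertices. That does not contradict maximality; indeed $v_m$ itself is such a neighbour. So you cannot conclude that $C$ spans a closed component, and Steps~3--4 have nothing to stand on.
\end{itemize}
Your ``main obstacle'' paragraph also puts the difficulty in the wrong place: in the different-parts case there is no connected-but-crossing-fails situation. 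The P\'osa-rotation sketch there is not an argument.

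The missing idea, which is how the paper handles this case, is to work with a slightly shorter cycle and exploit the single left-over vertex.
\begin{itemize}
\item Write $P=(a_1,b_1,\dots,b_{r-1},a_r)$, so $|V(P)|=2r-1$ and any path on $2r$ vertices contradicts maximality.
\item All neighbours of $a_1,a_r$ lie in $\{b_1,\dots,b_{r-1}\}$ and $2\delta(G)>r$, so there is an $i$ with $a_1b_{i+1},a_rb_i\in E$. This gives a cycle $C_1=(c_1,\dots,c_{2r-2})$ on $V(P)\setminus\{a_{i+1}\}$, with $c_{2j-1}\in A$, having $2r-2\ge N-2t$ vertices.
\item A vertex off $C_1$ with a neighbour on $C_1$ and a neighbour off $C_1$ would yield a path on $2r$ vertices. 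Hence every vertex off $C_1$ has either all or none of its neighbours on $C_1$.
\item The left-over vertex $a:=a_{i+1}$ has all its neighbours on $C_1$. Suppose some $b\in B':=B\setminus V(C_1)$ did as well. Pigeonhole over the $r-1$ pairs $(c_{2j-1},c_{2j})$ gives $ac_{2j},bc_{2j-1}\in E$. Then $a$, followed by $C_1$ traversed from $c_{2j}$ round to $c_{2j-1}$, followed by $b$, is a path on $2r$ vertices, a contradiction.
\item So $B'$ and $N(B')\subseteq A\setminus V(C_1)$ form a closed piece of minimum degree at least $N/2-t$. Moon--Moser, applied after balancing since this piece is in general unbalanced, gives the second cycle disjoint from $C_1$.
\end{itemize}
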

\begin{proof}
    Let $A = \{a_1,\ldots,a_N\}$ and $B = \{b_1,\ldots,b_N\}$. Let $P$ be a longest path in $G$. We have $|V(P)| \geq 2\delta(G) \geq N - 2t$, and also we can assume $|V(P)| < 2N - 8t$ as otherwise~\ref{itm:Dirac:i} would hold. We make a case distinction depending on whether the endpoints of $P$ lie in the same or in different partition classes of $G$.

    \textbf{Case 1:} Assume first that $P$ has exactly one endpoint in each of the sets $A$ and $B$. Then, w.l.o.g., $P = (a_1,b_1,\ldots,a_m,b_m)$ with $m = |V(P)|/2 \in [N/2 - t, N - 4t - 1]$. By maximality of $P$, all neighbours of $a_1$ must be in $\{b_1,\ldots,b_m\}$, and all neighbours of $b_m$ must be in $\{a_1,\ldots,a_m\}$. Since $\delta(G) \geq \frac{N}{2} - t > \frac{m}{2}$, there must be an index $i \in [m]$ such that $a_1b_i,a_ib_m \in E(G)$. Then
    $$C_1 = (a_1,b_i,a_{i+1},b_{i+1},\ldots,a_m,b_m,a_i,b_{i-1},a_{i-1},\ldots,b_1,a_1)$$ forms a cycle of length at least $N - 2t$. Now, if there was an edge between $V(P)$ and $V(G) \setminus V(P)$, we could find a path $P'$ with $|V(P')| > |V(P)|$ contradicting the maximality of $P$. 
    Hence, we may assume from now on
    that there is no edge between
    $V(P)$ and $V(G) \setminus V(P)$.
    Consider the bipartite graph $G' = (A' \cup B', E')$ induced by $V(G) \setminus V(P)$. We have $|A'|,|B'| \geq \delta(G) \geq \frac{N}{2} - t$. On the other hand, we have $|A'|,|B'| = N - m \leq N - (N/2 -t) = N/2 + t$. Thus, $\delta(G) \geq |A'| / 2$ and by Theorem~\ref{thm:moonmoser}, $G'$ has a Hamilton cycle $C_2$ of length at least $N - 2t$, and hence~\ref{itm:Dirac:ii} holds.

    \textbf{Case 2:} Assume now that $P$ has both endpoints in the same set, say $A$. W.l.o.g., we can assume $P = (a_1,b_1,\ldots,b_{m-1},a_m)$ with $m = (|V(P)| + 1)/2 \in [N/2 - t + 1, N - 4t]$. Again, by maximality of $P$, all neighbours of $a_1$ and $a_m$ have to be in $\{b_1,\ldots,b_{m-1}\}$. Since $\delta(G) \geq \frac{N}{2} - t > \frac{m}{2}$, we find $i \in [m]$ such that $a_1b_{i+1}, a_mb_i \in E(G)$. Then
    $$C_1 = (a_1,b_1,\ldots,a_i,b_i,a_m,b_{m-1},a_{m-1},\ldots,a_{i+2},b_{i+1},a_1)$$
    is a cycle with vertex set $V(P) \setminus \{a_{i+1}\}$ having length at least $2m-2 \geq N - 2t$. We want to show that we can extend $C_1$ to a path of length $2m > |V(P)|$ unless the neighbourhoods of the vertices in $V(G) \setminus V(C)$ behave in a specific way. For simplicity, relabel $C_1 = (c_1,c_2,\ldots,c_{2m-2},c_1)$ with $c_{2k - 1} \in A$ and $c_{2k} \in B$ for $k \in [m-1]$. If there was a vertex $v \in V(G) \setminus V(C_1)$ with a neighbour $c_j \in V(C_1)$ and a neighbour $w \in V(G) \setminus V(C_1)$, then $$(w,v,c_j,c_{j+1},\ldots,c_{2m-2},c_1,c_2,\ldots,c_{j-1})$$ would be a path of length $2m$, which contradicts the maximality of $P$. So, from now on assume that every vertex in $V(G) \setminus V(C_1)$ has all its neighbours either in $V(G) \setminus V(C_1)$, or in $V(C_1)$. It follows that $a := a_{i+1}$ has at least $\delta(G)$ neighbours in $V(C_1) \cap B$. If there was another vertex $b \in B' := B \setminus V(C_1)$ with all its at least $\delta(G)$ neighbours in $V(C_1) \cap A$, we would find $j \in [m-1]$ such that $ac_{2j},bc_{2j-1} \in E(G)$, and thus we would find a path $$(a,c_{2j},c_{2j+1},\ldots,c_{2m-2},c_1,c_2,\ldots,c_{2j-1},b)$$ of length $2m$, again contradicting the maximality of $P$. It follows that $N(B') \subset A \setminus V(C_1)$, and $N(N(B')) \subset B'$ because otherwise a vertex in $N(B')$ would have neighbours in $V(C_1)$ and $V(G) \setminus V(C_1)$. Hence, 
    applying Theorem~\ref{thm:moonmoser} analogously to the first case, we can conclude that $G[N(B') \cup B']$ contains a Hamilton cycle $C_2$ with length at least $N - 2t$.
\end{proof}

Next, we prove a lemma very similar to Lemma~\ref{lemma:special.colouring}, but instead of a good path or a special colouring, we aim for a good even cycle or a special colouring.
We say that two edges are \textit{independent} if they do not share a vertex.

\begin{lemma}\label{lemma:special.colouring.cycle}
Let $\varepsilon > 0$ and~$n$ be sufficiently large and even. If the edges of a complete graph on 
$N \geq (\frac32+\varepsilon)n$ 
vertices are coloured with three colours, say red, blue, and green, 
then at least one of the following properties hold.
\begin{enumerate}[label=\abc]
\item There is a good cycle on exactly $n$ vertices.
\item There exists a partition $X\cup Y\cup Z\cup W \subset V(K_N)$ of a subset of $V(K_N)$ such that 
\begin{itemize}
\item $|X|+|Y|\geq (\frac12 + 0.1\varepsilon)n$ and $|Z|+|W|\geq (\frac12 + 0.1\varepsilon)n$,
\item the sets $E(X,Y)\cup E(Z,W)$ and $E(X,Z)\cup E(Y,W)$ and $E(X,W)\cup E(Y,Z)$
are subsets of different colour classes.
\end{itemize}
\end{enumerate}
\end{lemma}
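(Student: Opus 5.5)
We follow the proof of Lemma~\ref{lemma:special.colouring}, but now aim for a good cycle on exactly $n$ vertices. This time we keep a margin of $0.1\varepsilon n$ in every size bound, and we use Lemma~\ref{lemma:Dirac.variant} where the path proof simply took long paths inside a monochromatic component.

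The first step is to find two disjoint monochromatic cycles of the same colour. Set $k := 2\lceil(\frac12+0.2\varepsilon)n/2\rceil$, so $k$ is even and $k \geq (\frac12+0.2\varepsilon)n$. Since $N \geq (\frac32+\varepsilon)n \geq 2k+k$ for small $\varepsilon$, Theorem~\ref{thm:ramsey_evencycle} gives a monochromatic cycle $C_1$ on $k$ vertices, and applying it again to the remaining $N-k \geq 2k$ vertices gives a disjoint monochromatic $C_2$.

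Suppose $C_1$ is red and $C_2$ is blue. If some edge between them is red or blue, we get a good cycle on exactly $n$ vertices. To see this, take a red path of suitable length $r$ inside $C_1$ and a blue path of length $n-r$ inside $C_2$, with endpoints chosen so that two cross edges close them into a cycle with two colour changes. For this we need a second red-or-blue cross edge. If there is only a single such edge, then almost all cross edges are green. A green $K_{k,k}$ minus a few edges contains a green $C_n$, because $2k>n$ and $n$ is even. Fitting the parameters exactly is a bit fiddly but routine.

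So we may assume that $C_1$ and $C_2$ are both red. If they lie in one red component, we use the same idea as before. A red path joining them, together with long arcs of the cycles, should give a red cycle or a good cycle of length exactly $n$. This is less immediate than in the path case, since a path cannot be closed for free. Here I would instead pass to a structural argument: consider the red graph restricted to $V(C_1)\cup V(C_2)$ and the bipartite graph of non-red edges between appropriate sets.

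The main case is when the two cycles lie in different red components. Then, exactly as before, we split $V(K_N)=V_1\cup V_2$ with no red edges between the parts and $|V_i|\geq k$, and apply Corollary~\ref{cor:split.bipartit} with $k'=n/2$. Either we get a good cycle on $n$ vertices directly, or the green/blue colouring between $V_1$ and $V_2$ is split with parts $X\cup Y$ and $Z\cup W$. It remains to make sure that $E(X,Y)\cup E(Z,W)$ is red after possibly deleting a few vertices. This is where the statement only asks for a partition of a \emph{subset} of $V(K_N)$, and where Lemma~\ref{lemma:Dirac.variant} comes in.

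Call a vertex bad if it has many (say $\geq \varepsilon n/100$) blue or green neighbours inside its own side, for instance $x\in X$ with many blue neighbours in $Y$. I would then argue as in the final paragraph of the proof of Lemma~\ref{lemma:special.colouring}. Many non-red edges inside the side $X\cup Y$ let us build a long blue or green cycle through both $X$ and $Y$. Together with the blue and green bipartite graphs $E(X,W)\cup E(Y,Z)$ and $E(X,Z)\cup E(Y,W)$, this should yield a good cycle of length exactly $n$. Lemma~\ref{lemma:Dirac.variant} is used to turn the resulting dense bipartite colour classes into long paths or cycles of adjustable length.

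Otherwise, the non-red edges inside each side form a graph of small maximum degree. Deleting a set of at most $0.1\varepsilon n$ vertices then leaves $E(X,Y)\cup E(Z,W)$ red, while $|X|+|Y|$ and $|Z|+|W|$ each stay at least $(\frac12+0.1\varepsilon)n$. A small-maximum-degree graph does not in general allow deleting few vertices to destroy all its edges, so this needs care. I would first find many independent non-red edges (a large matching), use them as connectors to obtain a good cycle, and otherwise delete a small vertex cover.

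I expect this cleaning step to be the main obstacle. Controlling lengths exactly at $n$, rather than at least $n$, using Lemma~\ref{lemma:Dirac.variant} will be the delicate part.
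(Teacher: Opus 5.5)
\textbf{The gap is in the case where $C_1$ and $C_2$ have different colours}, say $C_1$ red and $C_2$ blue.

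Your step ``two red-or-blue cross edges give a good cycle on exactly $n$ vertices'' does not work. Once the cross edges $v_1w_1$ and $v_2w_2$ are fixed, the only red $v_1$--$v_2$ paths you know of inside $V(C_1)$ are the two arcs of $C_1$; the chords of $C_1$ may have any colour. The same holds in $C_2$. So the cycle length is forced to be one of four numbers determined by $\mathrm{dist}_{C_1}(v_1,v_2)$ and $\mathrm{dist}_{C_2}(w_1,w_2)$, and there is no freedom to choose $r$. For example, suppose the only non-green cross edges are $u_0w_0$ and $u_1w_1$, with $u_0u_1\in E(C_1)$ and $w_0w_1\in E(C_2)$. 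Then you only obtain lengths $4$, $k+2$ and $2k$. (This colouring does contain a green $C_n$, but your case distinction routes it to the arc argument.)

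Your complementary case, ``at most one such edge, so the green cross graph is almost complete'', also misses the real difficulty. The critical regime is when most vertices have only about $k/2$ non-green cross edges. Then the green cross graph has minimum degree only about $k/2$ and may be disconnected, e.g.\ two disjoint copies of $K_{k/2,k/2}$, which contain no green $C_n$ since $k<n$. Meanwhile nothing guarantees that the non-green cross edges sit at positions whose arc lengths add up to $n$. Getting the length to be exactly $n$ here is the heart of the lemma, not a routine fitting of parameters.

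\textbf{How the paper handles this.} The paper uses a degree threshold.
\begin{itemize}
\item If more than $2\delta^{-1}$ vertices of $C_1$ have more than $k/2$ non-green cross edges, two of them, $v_1,v_2$, lie at distance $t\le\delta n$ on $C_1$. Their non-green neighbourhoods $W_1,W_2\subset V(C_2)$ both have size greater than $k/2$. Set $m=2k-n$, shift $W_2$ by $m-t+2$ along $C_2$, and intersect with $W_1$. This yields $w_1,w_2$ at exactly the right distance. Deleting the interiors of the two short arcs leaves a good cycle on exactly $n$ vertices.
\item Otherwise, after deleting $O(\delta^{-1})$ vertices, the green cross graph has minimum degree at least $\frac{k}{2}-O(\delta^{-1})$. \emph{This} is where Lemma~\ref{lemma:Dirac.variant} enters. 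It gives either a long green path, which you truncate to $n$ vertices and close at a cost of at most two colour changes, or two disjoint green cycles of length more than $(\frac{1}{2}+\frac{\delta}{2})n$. The latter reduces to the same-colour case.
\end{itemize}

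\textbf{Other steps of your plan are simpler than you expect.}
\begin{itemize}
\item You use Lemma~\ref{lemma:Dirac.variant} in the cleaning step, where it is not needed. Given the split structure, two independent non-red edges of the same colour in $E(X,Y)$ (or in $E(Z,W)$) already give an explicit good cycle on exactly $n$ vertices, taking $s_X+s_Y=\frac{n}{2}+1$ and $s_Z+s_W=\frac{n}{2}-1$. Otherwise the blue edges and the green edges there each form a star. Deleting at most two vertices per side then makes $E(X,Y)\cup E(Z,W)$ red.
\item The same-component case you worry about is trivial: a monochromatic path on at least $n$ vertices closes to a good $C_n$. The paper avoids red components altogether by working only inside $V(C_1)\cup V(C_2)$, which is why (b) asks only for a partition of a subset.
\end{itemize}
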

\begin{proof}
    Fix an arbitrary colouring of $K_N$. We proceed similarly to the proof of Lemma~\ref{lemma:special.colouring}, first looking for two disjoint monochromatic cycles $C_1$ and $C_2$ of a certain length. Set $\delta := \varepsilon / 4$ and choose $k \in \big[(\frac{1}{2} + \delta)n,(\frac{1}{2} + \delta)n + 2\big]$ to be even. For sufficiently large $n$, we can apply Theorem~\ref{thm:ramsey_evencycle} to find a monochromatic cycle~$C_1$ of length $k$. Moreover, since $N - k > 2k$, we can find another disjoint monochromatic cycle~$C_2$ of length $k$ on the remaining vertices. We distinguish several cases.

    \textbf{Case 1:} Assume $C_1$ and $C_2$ have the same colour, say red. If there is a red edge between $V(C_1)$ and $V(C_2)$, we immediately obtain a monochromatic red path on $n$ vertices. Adding the edge between the endpoints can introduce at most two colour changes, giving the desired good cycle on $n$ vertices. Otherwise, all edges between $V(C_1)$ and $V(C_2)$ are green or blue. Then, by Corollary~\ref{cor:split.bipartit}, we either find a good path on $2k$ vertices, or the green-blue colouring between $V(C_1)$ and $V(C_2)$ is split. If we find a good path $P$ on $2k > n$ vertices, we can shorten it to exactly $n$ vertices. As $n$ is even, one of $P$'s endpoints lies in $V(C_1)$ and the other lies in $V(C_2)$. Therefore, the edge between the endpoints is blue or green, and adding this edge produces a good cycle on $n$ vertices, as required. 

    So from now on, assume that the colouring between $V(C_1)$ and $V(C_2)$ is split, i.e.~there is a partition $V(C_1) = X \cup Y$ and $V(C_2) = Z \cup W$ such that $E(X,W) \cup E(Y,Z)$ is blue and $E(X,Z) \cup E(Y,W)$ is green. We consider two sub-cases.
    
    \textbf{Case 1.1:} Assume that we can find two non-red independent edges of the same colour in $E(X,Y)$ or in $E(Z,W)$. W.l.o.g., assume there are two independent blue edges in $E(X,Y)$. Then $|X|,|Y| \geq 2$. Moreover, w.l.o.g.~we may also assume that $|X| \geq |W|$ and hence $|Y|\le|Z|$. Otherwise, we could exchange roles of $X$ and $Y$, and of $Z$ and $W$. Then, we construct a good cycle on $n$ vertices as follows:
    Choose integers $s_X \leq |X|, s_Y \leq |Y|, s_Z \leq |Z|, s_W \leq |W|$ with $s_X + s_Y = \frac{n}{2} + 1$, $s_Z + s_W = \frac{n}{2} - 1$, and $s_X,s_Y \geq 2$,
    and $s_X>s_W$. Label the vertices $X =\{x_1,x_2,\ldots,x_{|X|}\}$, $Y =\{y_1,y_2,\ldots,y_{|Y|}\}$, $Z =\{z_1,z_2,\ldots,z_{|Z|}\}$, and $W =\{w_1,w_2,\ldots,w_{|W|}\}$ such that $x_{s_W + 1}y_1$ and $x_1y_{s_Y}$ are blue. Then the cycle
    \begin{align*}
        (&x_1,w_1,x_2,w_2,\ldots,w_{s_W},x_{s_W + 1},y_1,z_1,y_2,z_2,\ldots,y_{s_Y - 1},\\
        &z_{s_Y - 1},x_{s_W + 2},z_{s_Y},x_{s_W + 3},z_{s_Y + 1}, \ldots, x_{s_X},z_{s_Z},y_{s_Y},x_1)
    \end{align*}
    is a good cycle on $n$ vertices with colour changes at $z_{s_Y - 1}$ and $z_{s_Z}$. (Note that if $s_X = s_W + 1$ we go from $z_{s_Y-1} = z_{s_Z}$ to $y_{s_Y}$. If $s_X = s_W + 2$, we go from $x_{s_W + 2}$ to $z_{s_Z}$.)

    \textbf{Case 1.2:} Assume we cannot find two independent non-red edges of the same colour in $E(X,Y)$ or in $E(Z,W)$. Then, by deleting at most two vertices from each of the sets $X,Y,Z$, and $W$, we eliminate all green and blue edges in $E(X,Y) \cup E(Z,W)$. This yields the desired partition.

    \textbf{Case 2:} Assume $C_1$,$C_2$ have different colours, say $C_1$ is blue, and $C_2$ is green. In the proof of Lemma~\ref{lemma:special.colouring}, the presence of even a single green or blue edge between the two cycles sufficed to produce a good path. Otherwise, the bipartite graph between them was completely red, which trivially contained a monochromatic path. For cycles, however, the situation is not so simple: even multiple green and blue edges do not necessarily yield a good cycle on exactly $n$ vertices. We therefore distinguish whether there are many red edges between $C_1$ and $C_2$ or not.

    \textbf{Case 2.1:} Assume more than $2\delta^{-1}$ vertices in $V(C_1)$ have less than $\frac12 k$ incident red edges with the other endpoint in $V(C_2)$. In other words, each of these vertices has more than $\frac12 k$ edges which are green or blue towards $C_2$. Among them, choose two vertices $v_1$ and $v_2$ whose distance along $C_1$ is at most $\delta n$, let $t := \mathrm{dist}_{C_1}(v_1,v_2)$. Further, set $m := 2k - n \geq 2\delta n$. If we can find vertices $w_1,w_2 \in V(C_2)$ with $\mathrm{dist}_{C_2}(w_1,w_2) = m - t + 2$, such that both $v_1w_1$ and $v_2w_2$ are not red, we find a good cycle on $n$ vertices as follows: Take $C_1 \cup C_2$, and add the edges $v_1w_1$, $v_2w_2$. Then, delete the $t-1$ vertices between $v_1$ and $v_2$ on $C_1$, and the $m - t + 1$ vertices between $w_1$ and $w_2$ on $C_2$. The resulting cycle has $2k - (t-1) - (m - t + 1) = 2k - m = n$ vertices. For each $i \in [2]$, a colour change occurs at either $v_i$ or $w_i$, giving exactly two colour changes, as required.

    It remains to show that such vertices $w_1, w_2 \in V(C_2)$ exist. For this, define \[W_i = \big\{ w \in V(C_2) : v_iw \text{ is blue or green}\big\},\] and \[W_i' = \big\{u \in V(C_2): \exists w \in W_i \text{ with } \mathrm{dist}_{C_2}(w,u) = m - t + 2\big\}\] for $i \in [2]$. Note that $|W_i'| \geq |W_i|$ for each $i \in [2]$, since every vertex in $W_i$ produces a unique vertex of $W_i'$ by moving $m - t + 2$ steps in a fixed direction along $C_2$. By assumption, $|W_i| > \frac{k}{2}$ for $i \in [2]$, and therefore $|W_i'| > \frac{k}{2}$ as well. Hence, we find $w_1 \in W_1 \cap W_2'$. Since $w_1 \in W_2'$, we find $w_2 \in W_2$ such that $\mathrm{dist}_{C_2}(w_1,w_2) = m - t + 2$. Moreover, none of the edges
    $v_1w_1$ and $v_2w_2$ is red, as
    $w_i\in W_i$ for both $i\in [2]$,
    and hence the requirements described above are fulfilled. Therefore, we obtain a good cycle on $n$ vertices.

    \textbf{Case 2.2:} Assume at most $2\delta^{-1}$ vertices in $V(C_1)$ have less than $\frac12 k$ incident red edges with the other endpoint in $V(C_2)$. For $i \in [2]$, we create a subset $V_i \subset V(C_i)$ by deleting $\lceil 2\delta^{-1} \rceil$ vertices of $C_i$ with the smallest number of incident red edges to $C_{3-i}$. Let $G_R$ be the bipartite graph with vertex classes $V_1$ and $V_2$ and edge set consisting of all red edges of $E(V_1,V_2).$ Then, $$|V_1| = |V_2| = N' := k - \lceil 2 \delta^{-1} \rceil \text{ and } \delta(G_R) \geq \frac12 k - \lceil 2 \delta^{-1} \rceil \geq \frac{N'}{2} - \lceil 2 \delta^{-1} \rceil.$$

    Let $P$ be a longest path in $G_R$. If $|V(P)| \geq 2(k - \delta n) \geq n$, we can shorten $P$ to have exactly $n$ vertices, and add the edge between the endpoints. Since $P$ is completely red, this leads to a cycle on $n$ vertices with at most two colour changes. Otherwise, we can assume that $|V(P)| < 2(k - \delta n) \leq 2N' - 8 \lceil 2 \delta^{-1} \rceil.$ By Lemma~\ref{lemma:Dirac.variant} with $t = \lceil 2 \delta^{-1} \rceil$, the graph $G_R$ contains two disjoint cycles $C_1'$ and $C_2'$ each of length at least 
    $k' := N' - 2t = k - O(\delta^{-1}) > (\frac12 + \frac{\delta}{2})n$. Now, we can repeat the argument of Case~1 with $C_1',C_2',k'$ in place of $C_1,C_2,k$.
\end{proof}

\begin{remark}
    Note that in the above proof we only use the assumption of $n$ being even in Case~1,
    and this happens in two places only: (1)
    in the first paragraph of Case 1, when we conclude that the path $P$ has one endpoint in each of the sets $V(C_1)$ and
    $V(C_2)$, and (2) when we construct the good cycle in Case 1.1. If $n$ is odd,
    we would still be able to construct a good cycle in Case 1.1 as follows: choose $s_X + s_Y = \lceil \frac{n}{2} \rceil$ and $s_W + s_Z = \lfloor \frac{n}{2} \rfloor$, and create a cycle as before, but return directly from $z_{s_Z}$ to $x_1$ instead of passing through $y_{s_Y}$.
    Thus, it would be enough to fix the argument in (1) for odd $n$ in order to prove Lemma~\ref{lemma:special.colouring.cycle} independent of the parity of $n$.
\end{remark}

For the proof of Theorem~\ref{thm:main_even.cycle} we now only need the following theorem, which states that given the special colouring from Lemma~\ref{lemma:special.colouring.cycle} we find a good cycle on $n$ vertices. We prove the theorem for odd and even cycles so that one would obtain Theorem~\ref{thm:main_even.cycle} for all cycles, if one could prove Lemma~\ref{lemma:special.colouring.cycle} for odd cycles. 

\begin{theorem}\label{thm:cycle.split.colouring}
    Let $n$ be sufficiently large. Let a complete graph on 
    $N \geq \frac{3}{2}n + 4$
    vertices
    be coloured with red, blue and green
    such that there exists a partition
    $V(K_N)=X\cup Y\cup Z\cup W$
    with
    $|X|+|Y|\geq \frac{n}{2}$ 
    and $|Z|+|W| \geq \frac{n}{2}$
    such that
    the set $E(X,Y)\cup E(Z,W)$ is red, 
    the set $E(X,Z)\cup E(Y,W)$ is green, 
    and the set $E(X,W)\cup E(Y,Z)$ is blue.
    Then there is
    a good cycle on exactly $n$ vertices.
\end{theorem}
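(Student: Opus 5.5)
We follow the proof of Theorem~\ref{thm:good.path.split.colouring}, replacing Lemma~\ref{lemma:ABC.paths.cycles}\ref{itm:ABC:i} by its cycle version \ref{itm:ABC:ii}. We also have to make sure the good cycle has \emph{exactly} $n$ vertices, not merely at least $n$. First we delete vertices so that $N$ equals $\lceil \frac{3n}{2}\rceil + 4$, keeping $|X|+|Y|\ge \frac n2$ and $|Z|+|W|\ge\frac n2$. Let $\mathcal{S}=\{X,Y,Z,W\}$.

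\textbf{Large parts.} Suppose first that some part is large, say $|X|\ge \lceil n/2\rceil$. Since $|Z|+|W|\ge n/2$, we proceed as in the path case. Zigzag between $X$ and $Z$ in green, then between $X$ and $W$ in blue, using exactly $\lceil n/2\rceil$ vertices of $X$ and $\lfloor n/2\rfloor$ vertices of $Z\cup W$, and close the cycle with one more edge. If both $Z$ and $W$ are used, the closing edge joins a vertex of $W$ to a vertex of $X$ (or of $Z$), so the cycle has at most two colour changes. If only one of $Z,W$ is used, the cycle is monochromatic except possibly at the closing edge. Odd $n$ needs slight care (two consecutive $X$ vertices joined by an edge inside $X$), but that adds only one extra change at each endpoint of that edge, so we must place it adjacent to the existing change. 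Hence we may assume $|A|\le \lceil n/2\rceil -1$ for all $A\in\mathcal S$. Then any two parts satisfy $|A|+|C|\ge N-n+2\ge n/2+6$, so $|A|+|C|>|B|$ for distinct $A,B,C$. We also get $|A|\ge 6$, which in particular gives $|B|\ge 4\ge p^A_f+p^C_g$, as Lemma~\ref{lemma:ABC.paths.cycles}\ref{itm:ABC:ii} requires. Finally, every triple satisfies $|A|+|B|+|C|=N-|D|\ge n+4$.

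\textbf{Averaging.} For each $A\in\mathcal S$ we take $\ell^A_c,p^A_c$ from Lemma~\ref{lem:sum.r.g.b} and apply Lemma~\ref{lemma:ABC.paths.cycles}\ref{itm:ABC:ii} to all $12$ ordered triples. Summing as before, the total length is at least $2\sum_A(|A|-1)+6N=8N-8\ge 12n+24$. So some triple gives a good cycle on at least $n$ vertices, and it uses only edges inside $A$, between $A$ and $B$, between $B$ and $C$, and inside $C$.

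\textbf{Exact length.} The main obstacle is cutting this cycle down to exactly $n$ vertices. We do not shorten the cycle after the fact. Instead we rerun the construction in the proof of Lemma~\ref{lemma:ABC.paths.cycles}, which is flexible: the parameters $s,t$ (the number of alternating $A$--$B$ and $B$--$C$ pairs), the set of $B$-vertices used, and the lengths of the sub-paths $P_i,Q_j$ (any sub-path of a monochromatic path is again monochromatic) can all be decreased freely. Removing one alternating pair lowers the length by $2$, and truncating a path $P_i$ by one vertex lowers it by $1$. Both operations keep the structure, and in particular the two colour changes at $b'_s$ and $b_p$. The length therefore takes every integer value between a small constant (of order $4$ connector vertices plus minimal path pieces) and the maximum found above. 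Thus an $n$-vertex good cycle exists, provided we can do the single-vertex trimming. That trimming needs some $P_i$ or $Q_j$ with at least two vertices. If every such path is a single vertex, then $\ell=p$ on both sides, and a parity adjustment is needed: we would use an extra alternation, or use the odd-$n$ variant trick from the Remark, closing through one edge of $E(A,C)$ instead, which adds at most one change at an already-changing vertex. This parity bookkeeping is the delicate step. The slack of $+4$ in $N$ is meant to absorb it.
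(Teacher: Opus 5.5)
Your averaging step matches the paper's proof, but there is a genuine gap in the large-part case for odd $n$. There your cycle uses $\lceil n/2\rceil$ vertices of $X$ and $\lfloor n/2\rfloor$ vertices of $Z\cup W$, so exactly one of its edges lies inside $X$.

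Suppose $|Z|,|W|<\lfloor n/2\rfloor$, which is compatible with $|Z|+|W|\ge n/2$, and suppose every edge of $E(X)$ is red. Then the cycle must use both $Z$ and $W$, so it contains a green edge, a blue edge, and the red edge inside $X$. A cycle whose edges carry three colours splits into at least three maximal monochromatic arcs, so it has at least three colour changes. This holds wherever you place the $X$--$X$ edge; ``placing it adjacent to the existing change'' cannot help, because the red edge forms an arc of its own.

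The missing idea is to use the part whose edges to $X$ have the colour of $E(X)$.
\begin{itemize}
\item If $E(X)$ contains a green or blue edge, close the zigzag with it, so that it simply extends the green or blue arc.
\item Otherwise $E(X)$ is entirely red, just like $E(X,Y)$. Since $(|X|+|Y|+|Z|)+(|X|+|Y|+|W|)=N+|X|+|Y|\ge 2n$, one of these triples, say $X,Y,Z$, has at least $n$ vertices.
\begin{itemize}
\item If $|Y|+|Z|\ge\lfloor n/2\rfloor$, zigzag $X$--$Z$ in green and $X$--$Y$ in red, and close with a red edge inside $X$.
\item Otherwise, zigzag all of $Z$ and $Y$ against $X$ and finish with a red path inside $X$.
\end{itemize}
\end{itemize}
In each case there are only two changes. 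This is exactly how the paper handles odd $n$.

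Your exact-length step is essentially fine, but the difficulty you flag is vacuous, and the patches you suggest would not work. Extra alternations change the length by $2$, so they cannot fix parity. An edge of $E(A,C)$ has the third colour, so it creates a third monochromatic arc and hence a third change.

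The case you worry about cannot occur. If all $P_i$ and $Q_j$ are single vertices, every edge of the cycle lies in $E(A,B)\cup E(B,C)$. The cycle is then bipartite with one side in $B$, so it has at most $2|B|\le 2\lceil n/2\rceil-2<n$ vertices. Hence, as long as the current cycle has more than $n$ vertices, some $P_i$ or $Q_j$ has at least two vertices, and single-vertex trimming together with removing alternating pairs reaches exactly $n$.

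The paper does this more directly. A cycle of length at least $n$ from the construction must contain an edge $vw$ inside $A$ or $C$ followed by an edge $wu$ into $B$. Since $vu$ and $wu$ lie in the same monochromatic set $E(A,B)$ or $E(B,C)$, the vertex $w$ can be deleted without creating a new change, and this is repeated until the length is $n$.
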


\begin{proof}
    Denote $X = \{x_1,\ldots,x_{|X|}\}$, $Y = \{y_1,\ldots,y_{|Y|}\}$, $Z = \{z_1,\ldots,z_{|Z|}\}$, and $W = \{w_1,\ldots,w_{|W|}\}$, and set $\mathcal{S} = \{X,Y,Z,W\}$. We distinguish two cases.

    \textbf{Case 1:} Assume $|A| \geq \frac{n}{2}$ for any $A \in \mathcal{S}$. W.l.o.g., suppose $|X| \geq \frac{n}{2}$. 
    
    First, consider the sub-case that $n$ is even. Since $|Z| + |W| \geq \frac{n}{2}$ by assumption, we can choose integers $0 \leq s_Z \leq |Z|$ and $0 \leq s_W \leq |W|$ with $s_Z + s_W = \frac{n}{2}$. Then,
    $$(x_1,z_1,x_2,z_2,\ldots,x_{s_Z},z_{s_Z},x_{s_Z+1},w_1,x_{s_Z + 2},w_2,\ldots,x_{\frac{n}{2}},w_{s_W},x_1)$$
    forms a cycle on $n$ vertices with colour changes at $x_1$ and $x_{s_Z + 1}$. 
    
    Now, consider the sub-case that~$n$ is odd. Note that $|X| \geq \lceil \frac{n}{2} \rceil$ in that case. If there is an edge, say $x_1x_{\lceil \frac{n}{2} \rceil} \in E(X)$ which is blue or green, we can choose values $0 \leq s_Z \leq |Z|$ and $0 \leq s_W \leq |W|$ with $s_Z + s_W = \lfloor\frac{n}{2}\rfloor$, and find the cycle
    $$(x_1,z_1,x_2,z_2,\ldots,x_{s_Z},z_{s_Z},x_{s_Z+1},w_1,x_{s_Z + 2},w_2,\ldots,w_{s_W},x_{\lceil \frac{n}{2} \rceil},x_1),$$
    which covers exactly $n$ vertices, and has colour changes at $x_{s_Z + 1}$ and either at $x_1$ or $x_{\lceil \frac{n}{2} \rceil}$ depending on the colour of $x_1x_{\lceil \frac{n}{2} \rceil}$. 
    
    So, from now on assume that there is no green or blue edge in $E(X)$, i.e.\ all edges inside $X$ are red. Consider the triples $X,Y,Z$ and $X,Y,W$. Since $$(|X| + |Y| + |Z|) + (|X| + |Y| + |W|) = N + (|X| + |Y|) \geq 2n,$$ we can conclude that one of the triples must include at least $n$ vertices. W.l.o.g., assume $|X| + |Y| + |Z| \geq n$. If $|Y| + |Z| \geq \lfloor \frac{n}{2} \rfloor$, we can find a good cycle as before by exchanging the roles of $W$ and $Y$. Otherwise, $|Y| + |Z| < \lfloor \frac{n}{2} \rfloor$. Then, we find $0 \leq s_X \leq |X|$ such that $s_X + |Y| + |Z| = n$, and a cycle
    \begin{align*}(&x_1,z_1,\ldots,x_{|Z|},z_{|Z|},x_{|Z| + 1},y_1,x_{|Z| + 2},y_2,\ldots,x_{|Z| + |Y|},y_{|Y|},\\&x_{|Z| + |Y| + 1},x_{|Z| + |Y| + 2},x_{|Z| + |Y| + 3},\ldots,x_{s_X},x_1),\end{align*}
    which covers exactly $s_X + |Y| + |Z| = n$ vertices with colour changes at $x_1$ and $x_{|Z| + 1}$.

    \textbf{Case 2:}
    Assume that $|A| < \frac{n}{2}$ for all $A \in \mathcal{S}$. Note that for distinct sets $A,B,C,D \in \mathcal{S}$ we have $|A| + |C| = N - (|B| + |D|) > 3\frac{n}{2} - 2\frac{n}{2} = \frac{n}{2}$. In particular, $|A| + |C| > |B|$ for any distinct sets $A,B,C \in \mathcal{S}$, and also $|A| = N - (|B| + |C| + |D|) > 4$ for any distinct sets $A,B,C,D \in \mathcal{S}$.

    We use Lemma~\ref{lem:sum.r.g.b} to find values $\ell^A_c \in [|A|]$ and $p^A_c \in [2]$ for every $A \in \mathcal{S}, c \in \{red,green,blue\}$ such that $A$ contains $p^A_c$ disjoint paths of colour $c$ that cover $\ell^A_c$ vertices of $|A|$ with \[\ell^A_{red} + \ell^A_{green} + \ell^A_{blue} - p^A_{red} - p^A_{green} - p^A_{blue} \geq |A| - 1\,.\] Now, for each $B \in \mathcal{S}$, and distinct $A,C \in \mathcal{S} \setminus \{B\}$, we have $|B| \geq 4\ge p^A_f + p^C_g$,
    where $f$ denotes the colour of $E(A,B)$ and 
    $g$ is the colour of $E(B,C)$. Therefore, by Lemma~\ref{lemma:ABC.paths.cycles}, there exists a good cycle $C_{B-\{A,C\}}$ on at least $\min\{|A| + |B| + |C|, 2|B| +\ell^A_f + \ell^C_g - p^A_f - p^C_g\}$ vertices using only edges from $E(A) \cup E(A,B) \cup E(B,C) \cup E(C)$. 
    
    We now want to show that one of those cycles has length at least $n$. If there is a choice $B \in \mathcal{S}$, and distinct $A,C \in \mathcal{S} \setminus \{B\}$ such that $|V(C_{B-\{A,C\}})| \geq |A| + |B| + |C| = N - |D| > n$ with $D \in \mathcal{S} \setminus \{A,B,C\}$, this holds immediately.
    
    Otherwise, assume $|V(C_{B-\{A,C\}})| \geq \ell^A_f + 2|B| + \ell^C_g - p^A_f - p^C_g$ for all $B \in \mathcal{S}$, $A,C \in \mathcal{S} \setminus \{B\}$, where again $f$ and $g$ are the colours of $E(A,B)$ and $E(B,C)$, respectively. Summing over all 12 cycles, gives
    \begin{align*}
    \sum_{\substack{B \in \mathcal{S} \\ \{A,C\} \subset \mathcal{S} \setminus \{B\}}} |V(C_{B-\{A,C\}})| & \ge 2 \sum_{A\in \mathcal{S}} (\ell^A_{red} + \ell^A_{green} + \ell^A_{blue} - p^A_{red} - p^A_{green} - p^A_{blue}) + 3\sum_{B\in \mathcal{S}} 2|B| \\
    & \geq 2 \sum_{A\in \mathcal{S}} (|A|-1) + 6 \sum_{B\in \mathcal{S}} |B|& \\
    & = 8N - 8 \geq  12n + 24,
    \end{align*}
    and we conclude that at least one of those cycles, say $C_{X-\{Y,Z\}}$, has length at least $n$.

    Finally, we show how to shorten $C_{X-\{Y,Z\}}$ to obtain a good cycle with exactly $n$ vertices. Observe that $C_{X-\{Y,Z\}}$ must use edges from both sets $E(Y) \cup E(Z)$ and $E(X,Y) \cup E(X,Z)$ because $|Y| + |Z| < n$ and $2|X| < n$. Thus, there exist three consecutive vertices $v,w,u$ on $C_{X-\{Y,Z\}}$ with $vw \in E(Y) \cup E(Z)$ and $wu \in E(X,Y) \cup E(X,Z)$. Because $vu$ and $wu$ have the same colour
    (since the only two colour changes lie in the set $X$), we can replace the sequence $v,w,u$ by $v,u$ without introducing additional colour changes. Repeating this process, we obtain a good cycle of length exactly $n$.
\end{proof}

Theorem~\ref{thm:main_even.cycle} follows from an application of Lemma~\ref{lemma:special.colouring.cycle} which leads to either a good cycle on $n$ vertices or yields a special colouring. In the latter case, Theorem~\ref{thm:cycle.split.colouring} guarantees the existence of a good cycle on $n$ vertices. 

We remark that the additional $o(n)$ term in Theorem~\ref{thm:main_even.cycle} and the restriction to even cycles only result from them being required for Lemma~\ref{lemma:special.colouring.cycle}. In contrast, Theorem~\ref{thm:cycle.split.colouring} already works for $N \geq \frac{3n}{2} + 4$ and for both odd and even cycles.

%%%%%%%%%%%%%%%%%%%%%%%%%%%%%%%%
% Concluding Remarks
%%%%%%%%%%%%%%%%%%%%%%%%%%%%%%%%
\section{Concluding Remarks}
We introduced a new Ramsey-type variant where we search for structures that are monochromatic up to a few colour changes. We provided upper and lower bounds on $R_3^1(P_n)$ that differ only by an additive constant, as well as on $R_3^2(C_n)$ for even $n$, where we require an $o(n)$ error term. 

The case of odd cycles remains open. 

\begin{problem}
  Determine the behaviour of $R_3^2(C_n)$ for odd~$n$.
\end{problem}

Our proof leads us to believe that $R_3^2(C_n)$ might have the same asymptotic behaviour for odd and even $n$, which would be very different from the behaviour of classical Ramsey numbers for cycles. In fact, the only case in our proof that does not work for odd cycles is Case~1 in the proof of Lemma~\ref{lemma:special.colouring.cycle} that leads to specific constraints on a potential extremal colouring. In particular, one would need a spanning complete bipartite graph in two colours with a cycle on $(1 + o(1))\frac{n}{2}$ vertices in the third colour in both partition classes. We were unable to construct such a colouring on more than $(\frac32 + o(1))n$ vertices that avoids a cycle on $n$ vertices with at most two colour changes for odd $n$. Extending Theorem~\ref{thm:main_even.cycle} to odd cycles would thus require a refined analysis of this case, taking into account the edges inside the partition classes of the special colouring.

Additionally, it would be desirable to prove $R_3^2(C_n) = \frac{3n}{2} + O(1)$ for even $n$. Here, the only obstruction is Case~2 in the proof of Lemma~\ref{lemma:special.colouring.cycle}, where we find two cycles on $(1 + o(1))\frac{n}{2}$ vertices with different colours. An improved analysis of this case that considers the edges within each cycle might allow the $o(n)$ error term to be replaced by $O(1)$. 

\subsection*{From path covers to paths with colour changes.}

Building on Conjecture~\ref{conj:path.cover}, we believe that even stronger statements may hold.

\begin{conjecture}\label{conj:spanning.good.path}
    For all $k,n \in \mathbb{N}$, we have $R_k^{k-1}(P_n) = n$.
\end{conjecture}

\begin{conjecture}\label{conj:spanning.good.cycle}
    For all $k,n \in \mathbb{N}$, we have $R_k^{k}(C_n) = n$.
\end{conjecture}

The first of these conjectures immediately implies Conjecture~\ref{conj:path.cover}. Indeed, given a Hamilton path with at most $k-1$ colour changes, we can delete an edge at each vertex where a colour change occurs, and are left with at most $k$ monochromatic paths covering all vertices.

\subsection*{Spanning structures.} Instead of asking for the number of vertices required to guarantee a given graph with a fixed number of colour changes, one can also ask for the number of colour changes needed to guarantee a spanning copy of a given graph. More precisely, for a graph $G$ and $q \in \mathbb{N}$, we define 
\[\kappa_q(G) := \min \big\{k \in \mathbb{N} : R_q^k(G) = |V(G)|\big\}\,.\]
This defines a monotone graph parameter, opening up a wide range of questions. A particularly interesting direction seems to be determining maximal values of $\kappa_q$ for certain families of graphs.

\begin{problem}
    Let $\mathcal{T}_n$ be the family of all trees on $n$ vertices. Determine bounds on $\max_{T \in \mathcal{T}_n} \kappa_2(T)$.
\end{problem}

Similar questions can be asked for other graph families, such as planar graphs or $k$-regular graphs.

\bibliographystyle{amsplain}
\bibliography{references}

@article{pokrovskiy2014partitioning,
  title={Partitioning edge-coloured complete graphs into monochromatic cycles and paths},
  author={Pokrovskiy, A.},
  journal={Journal of Combinatorial Theory, Series B},
  volume={106},
  pages={70--97},
  year={2014},
  publisher={Elsevier}
}

@article{benevides20093,
  title={The 3-colored {R}amsey number of even cycles},
  author={Benevides, F.S. and Skokan, J.},
  journal={Journal of Combinatorial Theory, Series B},
  volume={99},
  number={4},
  pages={690--708},
  year={2009},
  publisher={Elsevier}
}

@article {BessyThomasse,
    AUTHOR = {Bessy, S. and Thomass\'e, S.},
     TITLE = {Partitioning a graph into a cycle and an anticycle, a proof of
              {L}ehel's conjecture},
   JOURNAL = {Journal of Combinatorial Theory, Series B},
    VOLUME = {100},
      YEAR = {2010},
    NUMBER = {2},
     PAGES = {176--180},
      ISSN = {0095-8956,1096-0902},
}

@inproceedings {kohayakawa20053,
    AUTHOR = {Kohayakawa, Y. and Simonovits, M. and Skokan,
              J.},
     TITLE = {The 3-colored {R}amsey number of odd cycles},
 BOOKTITLE = {Proceedings of {GRACO} 2005},
    SERIES = { Electronic Notes in Discrete Mathematics},
    VOLUME = {19},
     PAGES = {397--402},
 PUBLISHER = {Elsevier, Amsterdam},
      YEAR = {2005},
       DOI = {10.1016/j.endm.2005.05.053},
       URL = {https://doi.org/10.1016/j.endm.2005.05.053},
}

@article{moon1963hamiltonian,
  title={On {H}amiltonian bipartite graphs},
  author={Moon, J. and Moser, L.},
  journal={Israel Journal of Mathematics},
  volume={1},
  number={3},
  pages={163--165},
  year={1963},
  publisher={Springer New York}
}

@article{erdos1935combinatorial,
  title={A combinatorial problem in geometry},
  author={Erd{\H{o}}s, P. and Szekeres, G.},
  journal={Compositio Mathematica},
  volume={2},
  pages={463--470},
  year={1935}
}

@article{erdos1947some,
  title={Some remarks on the theory of graphs},
  author={Erd{\H{o}}s, P.},
  journal={Bulletin of the American Mathematical Society},
  volume={53},
  number={4},
  pages={292--294},
  year={1947}
}

@article{campos2023exponential,
  title={An exponential improvement for diagonal {R}amsey},
  author={Campos, M. and Griffiths, S. and Morris, R. and Sahasrabudhe, J.},
FJOURNAL = {Ann. of Math. (2)},
  JOURNAL = {Annals of Mathematics. Second Series},
    VOLUME = {203},
      YEAR = {2026},
    NUMBER = {3},
     PAGES = {869--932},
       DOI = {10.4007/annals.2026.203.3.4},
       URL = {https://doi.org/10.4007/annals.2026.203.3.4},
}

@article{ramsey1930problem,
  title={On a Problem of Formal Logic},
  author={Ramsey, F.P.},
  journal={Proceedings of the London Mathematical Society},
  volume={2},
  number={1},
  pages={264--286},
  year={1930},
  publisher={Wiley Online Library}
}

@article{chvatal1983ramsey,
  title={The {R}amsey number of a graph with bounded maximum degree},
  author={Chv{\'a}tal, V. and R{\"o}dl, V. and Szemer{\'e}di, E. and Trotter Jr, W.T.},
  journal={Journal of Combinatorial Theory, Series B},
  volume={34},
  number={3},
  pages={239--243},
  year={1983},
  publisher={Elsevier}
}

@incollection{BurrErdos1975,
  author       = {Burr, S.A. and Erd{\H{o}}s, P.},
  title        = {On the magnitude of generalized {R}amsey numbers for graphs},
  booktitle    = {Infinite and Finite Sets, Vol.\ 1, Colloquia Mathematica Societatis János Bolyai, 10},
  pages        = {214--240},
  publisher    = {North-Holland},
  address      = {Amsterdam / London},
  year         = {1975}
}

@article{gerencser1967ramsey,
    AUTHOR = {Gerencs\'er, L. and Gy\'arf\'as, A.},
     TITLE = {On {R}amsey-type problems},
  JOURNAL = {Annales Universitatis Scientiarum Budapestinensis de Rolando
              E\"otv\"os Nominatae. Sectio Mathematica},
    VOLUME = {10},
      YEAR = {1967},
     PAGES = {167--170},
}

@article{gyarfas2007three,
  title={Three-color {R}amsey numbers for paths},
  author={Gy{\'a}rf{\'a}s, A. and Ruszink{\'o}, M. and S{\'a}rk{\"o}zy, G. and Szemer{\'e}di, E.},
  journal={Combinatorica},
  volume={27},
  number={1},
  pages={35--70},
  year={2007},
  publisher={Budapest: Akademiai Kiado,[1981-}
}

@article{knierim2019improved,
  title={Improved bounds on the multicolor {R}amsey numbers of paths and even cycles},
  author={Knierim, C. and Su, P.},
  journal={The Electronic Journal of Combinatorics},
  volume={26},
  number={1},
  pages={1--26},
  year={2019},
  publisher={Electronic Journal of Combinatorics}
}

@article{yongqi2006new,
  title={New lower bounds on the multicolor {R}amsey numbers {$R_r(C_{2m})$}},
  author={Yongqi, S. and Yuansheng, Y. and Feng, X. and Bingxi, L.},
  journal={Graphs and Combinatorics},
  volume={22},
  number={2},
  pages={283--288},
  year={2006},
  publisher={Springer}
}

@book{west2001introduction,
  title={Introduction to graph theory},
  author={West, D.B.},
 PUBLISHER = {Prentice Hall, Inc., Upper Saddle River, NJ},
      YEAR = {2001},
      edition={2nd},
     PAGES = {588+xx},
      ISBN = {0-13-014400-2},
}

@article{rosta1973ramsey,
  title={On a {R}amsey-type problem of {JA B}ondy and {P}. {E}rd{\"o}s. {I}},
  author={Rosta, V.},
  journal={Journal of Combinatorial Theory, Series B},
  volume={15},
  number={1},
  pages={94--104},
  year={1973},
  publisher={Elsevier}
}

@article{bialostocki1991simple,
  title={On simple Hamiltonian cycles in a 2-colored complete graph},
  author={Bialostocki, A. and Dierker, P.},
  journal={Ars Combinatoria},
  volume={32},
  pages={13--16},
  year={1991}
}

@article{bondy1973ramsey,
  title={{R}amsey numbers for cycles in graphs},
  author={Bondy, J.A. and Erd{\H{o}}s, P.},
  journal={Journal of Combinatorial Theory, Series B},
  volume={14},
  number={1},
  pages={46--54},
  year={1973},
  publisher={Elsevier}
}

@article{faudree1974all,
  title={All {R}amsey numbers for cycles in graphs},
  author={Faudree, R.J. and Schelp, R.H.},
  journal={Discrete Mathematics},
  volume={8},
  number={4},
  pages={313--329},
  year={1974},
  publisher={Elsevier}
}

@incollection {gyarfas1989covering,
    AUTHOR = {Gy{\'a}rf{\'a}s, A.},
     TITLE = {Covering complete graphs by monochromatic paths},
 BOOKTITLE = {Irregularities of partitions ({F}ert\H od, 1986)},
    SERIES = {Algorithms and Combinatorics: Study and Research Texts},
    VOLUME = {8},
     PAGES = {89--91},
 PUBLISHER = {Springer, Berlin},
      YEAR = {1989},
}

@article{erdHos1991vertex,
  title={Vertex coverings by monochromatic cycles and trees},
  author={Erd{\H{o}}s, P. and Gy{\'a}rf{\'a}s, A. and Pyber, L.},
  journal={Journal of Combinatorial Theory, Series B},
  volume={51},
  number={1},
  pages={90--95},
  year={1991},
  publisher={Elsevier}
}

@article{luczak1998partitioning,
  title={Partitioning two-coloured complete graphs into two monochromatic cycles},
  author={{\L}uczak, T. and R{\"o}dl, V. and Szemer{\'e}di, E.},
  journal={Combinatorics, Probability and Computing},
  volume={7},
  number={4},
  pages={423--436},
  year={1998},
  publisher={Cambridge University Press}
}

@article{ALLEN_2008, 
title={Covering Two-Edge-Coloured Complete Graphs with Two Disjoint Monochromatic Cycles}, 
volume={17}, 
DOI={10.1017/S0963548308009164}, 
number={4}, 
journal={Combinatorics, Probability and Computing}, 
author={Allen, P.}, 
year={2008}, 
pages={471–486}}

@article{feder2013hypercube,
  title={On hypercube labellings and antipodal monochromatic paths},
  author={Feder, T. and Subi, C.},
  journal={Discrete Applied Mathematics},
  volume={161},
  number={10-11},
  pages={1421--1426},
  year={2013},
  publisher={Elsevier}
}

@article{raynaud1973circuit,
  title={Sur le circuit hamiltonien bi-color{\'e} dans les graphes orient{\'e}s},
  author={Raynaud, H.},
  journal={Periodica Mathematica Hungarica},
  volume={3},
  number={3-4},
  pages={289--297},
  year={1973},
  publisher={Akad{\'e}miai Kiad{\'o}}
}

@article{gyarfas1983vertex,
  title={Vertex coverings by monochromatic paths and cycles},
  author={Gy{\'a}rf{\'a}s, A.},
  journal={Journal of Graph Theory},
  volume={7},
  number={1},
  pages={131--135},
  year={1983},
  publisher={Wiley Online Library}
}

@article{manoussakis1996cycles,
    AUTHOR = {Manoussakis, Y. and Spyratos, M. and Tuza, Zs.},
     TITLE = {Cycles of given color patterns},
   JOURNAL = {Journal of Graph Theory},
    VOLUME = {21},
      YEAR = {1996},
    NUMBER = {2},
     PAGES = {153--162},
       DOI = {10.1002/(SICI)1097-0118(199602)21:2<153::AID-JGT4>3.3.CO;2-R},
}

@article{leader2014long,
  title={Long geodesics in subgraphs of the cube},
  author={Leader, I. and Long, E.},
  journal={Discrete Mathematics},
  volume={326},
  pages={29--33},
  year={2014},
  publisher={Elsevier}
}

@misc{NorineEdgeAntipodal,
  author       = {Norine, S.},
  title        = {Edge-antipodal colorings of cubes},
  year         = {2008},
  note          = {\url{http://www.openproblemgarden.org/op/edge_antipodal_colorings_of_cubes}},
  urldate      = {2025},
  organization = {Open Problem Garden}
}

@article{dvovrak2020note,
  title={A Note on {N}orine's Antipodal-Colouring Conjecture},
  author={Dvo{\v{r}}{\'a}k, V.},
  journal={The Electronic Journal of Combinatorics},
  pages={2--26},
  year={2020}
}

@article{gyarfas1973ramsey,
  title={A {R}amsey-type problem in directed and bipartite graphs},
  author={Gy{\'a}rf{\'a}s, A. and Lehel, J.},
  journal={Periodica Mathematica Hungarica},
  volume={3},
  number={3-4},
  pages={299--304},
  year={1973},
  publisher={Akad{\'e}miai Kiad{\'o}}
}

@unpublished{gupta2024optimizing,
  title={Optimizing the {CGMS} upper bound on {R}amsey numbers},
  author={Gupta, P. and Ndiaye, N. and Norin, S. and Wei, L.},
  note={arXiv:2407.19026},
  year={2024}
}

@article {balister2024upper,
    AUTHOR = {Balister, P. and Bollob\'as, B. and Campos, M. and
              Griffiths, S. and Hurley, E. and Morris, R. and
              Sahasrabudhe, J. and Tiba, M.},
     TITLE = {Upper bounds for multicolour {R}amsey numbers},
  JOURNAL = {Journal of the American Mathematical Society},
    VOLUME = {39},
      YEAR = {2026},
    NUMBER = {3},
     PAGES = {765--780},
       DOI = {10.1090/jams/1069},
       URL = {https://doi.org/10.1090/jams/1069},
}

@article {Lee_BurrErdos,
    AUTHOR = {Lee, C.},
     TITLE = {Ramsey numbers of degenerate graphs},
  JOURNAL = {Annals of Mathematics},
    VOLUME = {185},
      YEAR = {2017},
    NUMBER = {3},
     PAGES = {791--829},
       DOI = {10.4007/annals.2017.185.3.2},
       URL = {https://doi.org/10.4007/annals.2017.185.3.2},
}

\end{document}